\documentclass[11pt]{amsart}
\usepackage[T1]{fontenc}
\usepackage[utf8]{inputenc}
\usepackage[english]{babel}
\usepackage{amsmath, amsthm, amssymb, amsfonts, enumerate, mathrsfs, dsfont, comment, bm, mathtools}
\usepackage{enumitem}
\usepackage[colorlinks=true,linkcolor=blue,urlcolor=blue]{hyperref}
\usepackage{color, geometry, todonotes, indentfirst, graphicx, pdfpages}
\usepackage[title]{appendix}
\usepackage{fancyhdr}
\usepackage{polski}

\geometry{hmargin=3.0cm, vmargin=2.5cm}

\newtheorem{theorem}{Theorem}[section]
\newtheorem{remark}[theorem]{Remark}
\newtheorem{assumption}[theorem]{Assumption}
\newtheorem{lemma}[theorem]{Lemma}
\newtheorem{proposition}[theorem]{Proposition}

\newtheorem{definition}[theorem]{Definition}

\theoremstyle{plain}
\def \R{\mathbb{R}}
\def \N{\mathbb{N}}

\def \P{\mathbb{P}}
\def \E{{\mathbb{E}}}
\def \F{\mathbb{F}}

\def \cF{{\mathcal F}}

\def \A{\mathcal{A}}

\def \and{\quad \text{and} \quad}
\def \b{\bar}

\DeclareMathOperator*{\argmin}{arg\,min}

\DeclareMathOperator*{\Minimize}{Minimize}

\fancyhf{}
\fancyhead[LE,RO]{\bfseries\thepage}
\fancyhead[RE]{\bfseries\footnotesize\nouppercase{\leftmark}}
\fancyhead[LO]{\bfseries\footnotesize\nouppercase{\rightmark}}

\title[LQS games and applications]{
Linear-quadratic-singular stochastic differential games and applications}
\author[Dianetti]{Jodi Dianetti} 
\keywords{}

\date{\today}

\numberwithin{equation}{section}

\begin{document}
\maketitle 
 
\begin{abstract}
We consider a class of non-cooperative \emph{N}-player non-zero-sum stochastic differential games with singular controls, in which each player can affect a linear stochastic differential equation in order to minimize a cost functional which is quadratic in the state and linear in the control. 
We call these games \emph{linear-quadratic-singular stochastic differential games}. 
Under natural assumptions, we show the existence of open-loop Nash equilibria, which are characterized through a linear system of forward-backward stochastic differential equations.
The proof is based on an approximation via a sequence of games in which players are restricted to play Lipschitz continuous strategies.
We then discuss an application of these results to a  model of capacity expansion in oligopoly markets.
\end{abstract}

\smallskip
{\textbf{Keywords}}: Singular stochastic control, linear quadratic games, stochastic maximum principle, Nash equilibrium

\smallskip 
{\textbf{AMS subject classification}}: 
91A15, 
49N70, 
93E20, 
60H10 

\section{Introduction}
\subsection{Motivation}
Linear-quadratic stochastic differential games (LQ games, in short) are games in which players are allowed to affect a linear stochastic differential equation with an additive control, with the aim of minimizing a cost functional which is quadratic both in the state and in the control. 
For example, we can consider the game in which, for $i=1,...,N$, player $i$ can choose a control $\alpha^i$ and faces the control problem:
\begin{align*}
 \text{(LQ game)} \ 
 & \Minimize\limits_{{\alpha^i}} \  \mathbb{E} \bigg[ \int_0^T \big( X_t Q^i_t X_t + c_t^i (\alpha_t^i)^2 \big) dt + X_T Q^i_T X_T \bigg], \\
& \text{subject to } dX_t^{j}= ( a^j_t + b^j_t X_t^{j} + \alpha^j_t)dt + \sigma_t^j dW_{t}^j, \ X_{0}^{j}=x_0^j, \ j=1,...,N.
\end{align*} 
Here, $X=(X^1,...,X^N)$ denotes the vector of states of the players, which is affected by the Brownian motions $(W^j)_j$, while, for $y\in \R^N$, $yQ_t^i y$ denotes the product $ \sum_{k,j} Q_t^{k,j;i} y^k y^j$.

LQ games have received a huge attention in the literature. 
On the one hand, this class of models represents a precious and rare example of explicitly solvable games (see \cite{Carmona2016}), on the other hand, it serves as a benchmark theoretical tool for describing systems of interacting agents in many applications, ranging from economics, engineering, finance, biology and so on (see \cite{Carmona2016} and the references therein).

However, despite the popularity of LQ games, assuming  the cost  to be quadratic in the control is rather unrealistic in many applications, where the price of an intervention would rather be linear in its size. 
As an example, we can mention models of capacity expansion in oligopoly markets. 
Consider $N$ companies producing a certain good and selling it in the market.
Each company can adjust its production capacity to follow the market fluctuation of the demand, in order to maximize a net profit.
Such a reward is given by the profit obtained by the selling, which corresponds to the individual production multiplied by the market price (which is affected by the production of all the firms in the market) and by the cost of adjusting the production, that we may think as the actual cost of an investment. 
Thus, in this case it is reasonable to assume  the cost of an investment to be linear in the capacity expansion
 (see \cite{Back&Paulsen, grenadier2002option, Steg}).
Other examples in which the costs of intervention is not quadratic arise in resource allocation problems (see \cite{gao.lu.sharma.squillante.bosman2018bounded, georgiadis.needly.tassiulas.2006resource}), inventory management (see \cite{federico.ferrari.rodosthenous.2021two}),
operations research (see \cite{Guo&Kaminsky&Tomecek&Yuen11, Harrison&Taksar83}), queuing theory (see \cite{Krichagina&Taksar92}), insurance mathematics (see \cite{Lokka&Zervos08}), mathematical biology (see \cite{Alvarez&Sheep98}) and so on. 
 
All these examples represent the main motivation to study in a systematic way stochastic differential games  in which each player can control a linear stochastic differential equation in order to minimize a cost which is quadratic in the state and linear in the control. 
From the mathematical point of view, replacing the quadratic cost $c_t^i (\alpha_t^i)^2$ with a linear one (say, $c_t^i |\alpha_t^i|$), requires to introduce the so-called \emph{singular controls}: namely, to replace the additive control term $\int_0^t\alpha_s^i ds$ with some c\`adl\`ag (i.e., right continuous with left limits) bounded variation process $v_t^i$ (i.e., the singular control). 
Thus, the LQ game considered above is replaced with the game in which each player $i$ can choose a bounded variation control $v^i$ and faces the singular control problem:
\begin{align*}
\text{(LQS game)} \  
&  \Minimize\limits_{{v^i}} \ \mathbb{E} \bigg[ \int_0^T X_t Q^i_t X_t  dt + X_T Q^i_T X_T +  \int_{[0,T]} c_t^i d|v^i|_t  \bigg], \\
& \text{subject to } dX_t^{j}= ( a^j_t + b^j_t X_t^{j} )dt + \sigma_t^j dW_{t}^j + dv_t^j, \ X_{0-}^{j}=x_0^j, \ j=1,...,N.
\end{align*} 
Here $|v^j|$ denotes the total variation of the process $v^j$.
We call these games \emph{linear-quadratic-singular stochastic differential games} (LQS games, in short). 

\subsection{Background on singular control games}

A game with singular controls was first studied in  
\cite{grenadier2002option}, in order to derive symmetric equilibrium investment strategies in a continuous-time symmetric (i.e., when the cost functionals and the dynamics are the same for all players) option exercise game.
This model was later revised in \cite{Back&Paulsen}, where the open-loop equilibrium
 is provided under a suitable specification of the model.

For singular control problems, \cite{Bank&Riedel01} introduced a system of first order conditions, characterizing the optimal policies (see also \cite{Ferrari.2015.AAP, ferrari.salminen.2016irreversible}). 
These conditions represent a version of the stochastic maximum principle (see \cite{peng.90}) in the context of singular control.
Inspired by the earlier work \cite{Bank05}, \cite{Steg} consideres irreversible investment problems in oligopoly markets, determines the equilibrium in the symmetric case, and characterizes (in the non symmetric case) the open-loop equilibria through the first order conditions.
A similar approach is also followed in \cite{Ferrari&Riedel&Steg} for a public good contribution game in which  players are allowed to choose a regular control and a singular control. 
A general characterization of open-loop Nash equilibria through the stochastic maximum principle approach  has been investigated in \cite{Wang&Wang&Teo18} for regular-singular stochastic differential games. 
The existence of equilibria in a non-symmetric game  with multi-dimensional singular controls and non-Markovian costs has been established in \cite{DianettiFerrari} when the costs satisfy the submodularity conditions (see  \cite{To} for a seminal paper on static $N$-player submodular games).  
The submodularity property represents, roughly speaking, the situation in which players have an incentive to imitate the behaviour of their opponents, and it is widely used in the economic literature (see \cite{Topkis11, Vives01}).

The study of Markovian equilibria in games with singular controls seems to be particularly challenging  (see the discussion in Section 2 of \cite{Back&Paulsen}). 
Indeed, following the dynamic programming principle approach, finding a Markovian equilibrium means to construct a solution of a related reflected stochastic differential equation, on which few is known even for the control problem (see \cite{boryc&kruk2016, dianetti2021multidimensional, kruk2000}).
Hoverver, we can mention few contributions.
By showing a verification theorem, \cite{Guo&Tang&Xu18, Guo&Xu18} discuss some sufficient conditions for Nash equilibria in terms of a system of partial differential equations, and construct a Markovian equilibrium in a linear quadratic symmetric game.
When two players acts on the same one-dimensional diffusion, Nash equilibria are computed in \cite{kwon2020}, while connections between nonzero-sum games of singular control and games of optimal stopping have been tackled in \cite{DeAngelis&Ferrari18}.
We also mention \cite{Cont.Guo.Xu2020}, where Pareto optima are analysed, and \cite{bovo.deangelis.issoglio.2022variational, hernandez.simon.zervos.2015, Kwon&Zhang15} for other types of games involving singular controls. 

When the number of players is very large, Markovian equilibria can be approximated via mean field games (see \cite{HuangMalhameCaines06, LasryLions07}).
In the singular control case, the abstract existence of mean field game equilibria is studied under general conditions in \cite{Guo&Lee17, fu2023extended, Fu&Horst17} and, for submodular mean field games, in \cite{dianetti.ferrari.fischer.nendel.2022unifying}. 
A more explicit analysis is instead provided in \cite{Campietal, CaoGuo, Guo&Xu18} and in \cite{cao.dianetti.ferrari.2021}, both for the discounted infinite horizon problem and in the case of ergodic costs. 
We also mention the recent
\cite{he.tan.zou.2023mean}, which provides a representation theorem for the equilibria.

\subsection{Result and methodology} 
The main objective of this paper is to show, under fairly general assumptions, the existence of open-loop Nash equilibria for LQS games. 

The proof this result hinges on an approximation technique and on the use of the stochastic maximum principle. 
In particular, we introduce a sequence of approximating games where, for any $n\in \mathbb{N}$, players are restricted to pick strategies with Lispchitz constant bounded by $n$.  
For fixed $n$, this approximating problem can be reformulated in terms of a more standard stochastic differential game, which falls into the class of games with bang-bang controls (see \cite{hamadene.mannucci2019, hamadene.mu.2014bang, mannucci2004, mannucci2014}); that is, depending on the state of the system, players at equilibrium do nothing or act with the maximum rate allowed.  
Thanks to the results in \cite{hamadene.mannucci2019}, the existence of a Nash equilibrium $\eta^n = (\eta^{i,n},...,\eta^{N,n})$  for the game with $n$-Lipschitz controls can be established. 
By assuming some conditions on the coefficients of the matrices $Q^i$, we then show some a priori estimateson the sequence $(\eta^n)_n$. 
Indeed, this requirements on the $Q^i$ translate into a coercivity condition on the space of profile strategies and it ensures that the Nash equilibria, whenever they exist, always live in a bounded set.
These estimates allow to find an accumulation point $\eta$ of the sequence $(\eta^n)_n$.
Since, for each $n$, $\eta^n$ is a Nash equilibrium of the game with $n$-Lipschitz strategies, by the necessary conditions of the stochastic maximum principle, it can be expressed as the solution of a  certain forward-backward stochastic differential equation. 
We then take limits in such a system in order to prove that the limit point $\eta$ satisfy a set of conditions (in the spirit of the stochastic maximum principle), which in turn ensure $\eta$ to be a Nash equilibrium. 
Indeed, as a byproduct of our result, one obtains the existence a solution to the system of forward-backward stochastic differential equation related to the equilibria. 


As an application of our main result, we show the existence of equilibria in non-symmetric games of capacity expansion in oligopoly markets (see \cite{Back&Paulsen, Steg}).

\subsection{Outline of the paper}
The rest of the paper is organized as follows. 
In Section \ref{section LQS games} we introduce the probabilistic setup for LQS games and discuss some preliminary results. 
Sections \ref{section existence} is devoted to the existence theorem for Nash equilibria, while in Section \ref{subsection oligopoly investment games} we present an application to oligopoly games.

\section{Linear-quadratic-singular stochastic differential games}\label{section LQS games}

\subsection{The game} 
Fix $N \in \N$, $N \geq 2$, a finite time horizon $T\in (0,\infty)$, and consider an $N$-dimensional Brownian motion $W=(W^1,...,W^N)$,  defined on a complete probability space $(\Omega, \mathcal{F},\mathbb{P})$.
Denote by $\F= ( \cF _t)_t$ the right-continuous extension of the filtration generated by $W$, augmented by the $\P$-null sets.

Consider a game with $N$ players, indexed by $i \in \{ 1, ...,N \}$.
The filtration $\F$ represents the flow of information available to players.  
When player $i$ does not intervene,  its state $X^i$ evolves accordingly to the linear stochastic differential equation 
\begin{equation}
\label{eq SDE uncontrolled} 
    dX_t^{i}= ( a^i_t + b^i_tX_t^i )dt + \sigma^i_t dW_{t}^i,  \quad X_{0}^{i}=x^i_0.
\end{equation}   
The drift and the volatility of $X^i$ are given in terms of deterministic bounded measurable functions 
$a^i,b^i: [0,T] \to \R$ and $\sigma^i : [0,T] \to [0,\infty)$, while the initial condition $x_0^i\in \R$ is deterministic. 
Each player $i$ is allowed to choose two controls $\xi^i$ and $\zeta^i$ in the set 
\begin{equation*}
\tilde{\mathcal{A}}:= \left\{ \, \xi :\Omega \times [0,T] \rightarrow [0,\infty) \, \bigg| \,\begin{matrix} \xi \text{ is an $\mathbb{F}$-progressively measurable  c\`adl\`ag} \\ \text{  nondecreasing  process,  with $\E [ \xi_T ] < \infty $} \end{matrix} \, \right\}.
\end{equation*} 
Thus, the strategy of player $i$ is given by the vector $\eta^i = (\xi^i, \zeta^i) \in \tilde{\A}^2$.
We will denote by $\eta := (\eta^1, ..., \eta^N) \in \tilde{\A}^{2N}$ a vector of  strategies, also referred to as \emph{profile strategy}.
Given strategies $ \xi^j, \zeta^j \in \tilde \A $, $j=1,...,N$, with slight abuse we will interchangeably use  the notations $((\xi^1, \zeta^1),...,(\xi^N,\zeta^N)) = (\xi^1,...,\xi^N,\zeta^1,...,\zeta^N) = (\xi, \zeta)$.
Also, for a profile strategy $\eta \in \tilde \A ^{2N}$ and deviations $\bar \xi^i, \bar \zeta ^i \in \tilde \A$, we define the unilateral deviation for player $i$ as $(\eta,\eta^{-i})=(\xi,\zeta, \eta^{-i}) = ((\eta,\eta^{-i})^1,...,(\eta,\eta^{-i})^N)$ with
$$
(\eta,\eta^{-i})^j := 
\begin{cases}
   \eta^j & \text{if} \quad j \ne i, \\
   (\bar \xi ^i, \bar \zeta ^i) & \text{if} \quad  j = i.
\end{cases}
$$
A strategy $\eta^i = (\xi^i, \zeta^i) \in \tilde{\A}^2$ is said to be \emph{admissible} if it is an element of
$$
\A ^2:= \Big\{ \text{$\eta^i = (\xi^i, \zeta^i) \in \tilde{\A}^2$ with $v^i := \xi^i - \zeta^i$ satisfying }  \begin{matrix} \mathbb E \big[ \int_0^T |v_t^i|^2 dt  + |v_T^i|^2 \big] < \infty \end{matrix} \Big\}.
$$
Similarly, an admissible profile strategy is a vector $\eta \in \A ^{2N}$.

When the admissible profile strategy $\eta \in  \A^{2N}$ is chosen by the players, the controlled state $X^\eta:=(X^{1,\eta},...,X^{N,\eta})$ of the system evolves as 
\begin{equation}
\label{eq SDE}
    dX_t^{i,\eta}= ( a^i_t + b^i_t X_t^{i,\eta})dt + \sigma_t^i dW_{t}^i + d\xi_{t}^i - d \zeta^i_t, \quad X_{0-}^{i,\eta}=x_0^i, \quad i=1,...,N, 
\end{equation} 
where $X_{0-}^{i,\eta}$ denotes the left limit in $0$ of the process $X^{i,\eta}$.
Notice that the effect of the controls of the players is linear on the state, and that, for any $\eta \in \A^{2N}$, there exists a unique strong solution $X^\eta$ (we refer to \cite{Protter05} for further details).

Given admissible strategies $\eta^{-i} \in \A^{2(N-1)}$, the aim of player $i$ is to choose $\eta^i = (\xi^i,\zeta^i) \in \A^2$ in order to minimize the quadratic-singular expected cost
\begin{equation}\label{eq cost functional}
J^i(\eta^i,\eta^{-i}) := \mathbb{E} \bigg[ \int_0^T X_t^\eta Q^i_t X_t^\eta dt + X_T^\eta Q^i_T X_T^\eta + \int_{[0,T]} (c_t^{i,+} d\xi^{i}_{t} + c_t^{i,-} d\zeta^{i}_{t})   \bigg].
\end{equation}  
Here, the $\R^{N \times N}$ matrix $Q_t^i= (q_t^{k,j;i})_{k,j}$ is given via bounded measurable functions $q^{k,j;i}:[0,T] \to \R$, $k,j=1,...,N$, and we set
$$
y Q^i_t z := \sum_{j,k=1}^N q_t^{k,j;i} y^k z^j, \quad y, z \in \R^N.
$$ 
The cost of increasing and decreasing the state process is given by continuous functions $c^{i,+}, c^{i,-} :[0,T] \to [0,\infty)$, respectively.
Also, since any c\`adl\`ag bounded variation process $v$ can be identified with a Radon measure on $[0,T]$, for any continuous function $f:[0,T] \to \R$, the integrals with respect to $v$ are defined by
$$ 
\int_{[0,T]} f_t \, dv_t:= f_0 v_0 + \int_0^T f_t \, dv_t, 
$$
where the integral on the right hand side is intended in the standard Lebesgue-Stieltjes sense on the interval $(0,T]$.
Notice that, in light of the square integrability of the admissible strategies (see the definition of $\A^2$), the cost functional is well defined.

We will focus on the following notion of equilibrium.  
\begin{definition}
An admissible profile strategy $ \eta \in \A ^{2N}$ is an (open-loop) Nash equilibrium if
$$ 
J^i (  \eta ^i,  \eta ^{-i}) \leq J^i ( \bar \eta^i,  \eta ^{-i}), \quad \text{for any $\bar \eta^i \in \A ^2,$}
$$
for any $i =1,...,N$.
\end{definition}

\subsection{Stochastic maximum principle for LQS games} 
For later use, we now review some basic tools in the theory of stochastic singular control.
In particular, we will introduce the adjoint processes and state a version of the stochastic maximum principle.

For a generic $d \in \mathbb N$, define the set
$$
\mathbb H ^{2,d}:= \Big\{ \text{$M: \Omega \times [0,T] \to \R^d \, \Big| \, \mathbb F$-progr.\ meas.\ process with }  \begin{matrix} \mathbb E \big[ \int_0^T |M_t|^2 dt \big] < \infty \end{matrix} \Big\}
$$
and set $\mathbb H ^{2}:=\mathbb H ^{2,1}$.

Given an admissible profile strategy $\eta$, for any $i=1,...,N$, define the adjoint process $Y^{i,\eta} \in \mathbb H ^{2}$ as
\begin{equation}\label{eq BSDE adjoint}
    Y_t^{i,\eta} := 2 \E \bigg[ \Gamma^i_{t,T} Q^{i;i}_T X^\eta_T + \int_t^T \Gamma^i_{t,s}  Q^{i;i}_s X^\eta_s ds    \bigg| \mathcal F _t \bigg],  
     \quad t \in[0,T], 
\end{equation}
where
\begin{equation}\label{eq Gamma}
\Gamma_{s,t}^i : = \exp \Big( \int_s^t b^i_r dr \Big), \quad s,t \in [0,T].
\end{equation}
Here and in the sequel, for $i,j=1,...,N$, the (raw) vector $Q_t^{j;i} = (q_t^{j,1;i},...,q_t^{j,N;i})$ denotes the $j$-th raw of the matrix $Q^i_t$, and $Q_t^{j;i} x$ is the product $\sum_{k=1}^N q_t^{j,k;i} x^k$, $x\in \R^N$.
\begin{remark}
For any $i=1,...,N$, the process $Y^{i,\eta}$ admits a continuous version, and it coincides with the component $Y^i$ of the solution $(Y,Z) = (Y^1,...,Y^N, Z^1,...,Z^N)\in \mathbb H^{2,N} \times \mathbb H ^{2,N \times N}$ to the linear backward stochastic differential equation (BSDE, in short) 
$$ 
d Y ^j_t = - (  2 Q^{j;i}_t X^\eta_t + {b^j_t} Y_t^j) dt + Z_t^j dW_t,  \quad Y_T^j = 2 Q^{j;i}_T X^\eta_T, \quad j=1,...,N,
$$ 
which admits explicit solution as in \eqref{eq BSDE adjoint} (see Proposition 6.2.1 at p.\ 142 in \cite{pham2009continuous}).
Indeed, the driver of such a BSDE consists of the partial derivatives of the pre-Hamiltonians, and the solution of such a BSDE is typically referred to as adjoint process (see \cite{Wang&Wang&Teo18} for further details). 
\end{remark}

The ajoint process $Y^{i,\eta}$  can also be interpreted as the subgradient of the cost functional $J^i (\cdot, \eta^{-i})$. 
This observation is made rigorous in the following lemma.
\begin{lemma}\label{lemma subgradient identities}
For any $i =1,...,N$,  $\eta = (\eta^1, ...,\eta^N) = (\xi,\zeta) \in \A^{2N}$ and $(\b \xi ^i, \b \zeta ^i) \in \A^2$, we have
\begin{align*}
 J^i(\b \xi ^i, \b \zeta ^i; \eta^{-i}) - J^i(\xi ^i, \zeta ^i; \eta^{-i}) \geq & \E \bigg[ \int_{[0,T]} Y_t^{i,\eta}d(\b v^i - v^i)_t \\
 &  +  \int_{[0,T]} c_t^{i,+} d(\b \xi^i - \xi^i)_t + \int_{[0,T]} c_t^{i,-}d ( \b \zeta^i - \zeta^i)_t  \bigg],   
\end{align*} 
where $v^i : = \xi^i - \zeta^i$ and $ \b v ^i : = \b \xi ^i - \b \zeta ^i$.
\end{lemma}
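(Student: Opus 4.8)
The plan is to exploit the fact that the game is decoupled in its dynamics and coupled only through the cost: the state equation \eqref{eq SDE} for $X^{i,\eta}$ involves \emph{only} player $i$'s own controls $(\xi^i,\zeta^i)$, so replacing $(\xi^i,\zeta^i)$ by $(\b\xi^i,\b\zeta^i)$ while freezing $\eta^{-i}$ changes only the $i$-th coordinate of the state. Write $X:=X^\eta$, let $\b X$ be the state associated with $(\b\xi^i,\b\zeta^i;\eta^{-i})$, and set $\Delta X := \b X^i - X^i$ and $\delta v := \b v^i - v^i$. Then $\Delta X$ solves the linear equation $d\Delta X_t = b^i_t \Delta X_t\,dt + d(\delta v)_t$ with $\Delta X_{0-}=0$, and variation of constants together with \eqref{eq Gamma} gives the explicit representation
\[
\Delta X_t = \int_{[0,t]} \Gamma^i_{s,t}\,d(\delta v)_s, \qquad t\in[0,T].
\]
Boundedness of $b^i$ and the admissibility bound $\E[\int_0^T|\delta v_t|^2\,dt + |\delta v_T|^2]<\infty$ built into $\A^2$ ensure $\Delta X\in\mathbb H^2$ with $\Delta X_T\in L^2$, so that every integral below is finite.

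Next I would expand the difference of the quadratic running and terminal costs. Since $x Q^i_t x$ depends only on the symmetric part of $Q^i_t$, I may and do assume each $Q^i_t$ symmetric, whence the exact second-order expansion in the single coordinate $i$ reads $\b X_t Q^i_t \b X_t - X_t Q^i_t X_t = 2\,\Delta X_t\, Q^{i;i}_t X_t + q^{i,i;i}_t(\Delta X_t)^2$, and likewise at $t=T$. Taking expectations, the difference $J^i(\b\xi^i,\b\zeta^i;\eta^{-i}) - J^i(\xi^i,\zeta^i;\eta^{-i})$ splits into three pieces: (i) the linear term $2\,\E\big[\int_0^T \Delta X_t\, Q^{i;i}_t X_t\,dt + \Delta X_T\, Q^{i;i}_T X_T\big]$; (ii) the quadratic remainder $\E\big[\int_0^T q^{i,i;i}_t(\Delta X_t)^2\,dt + q^{i,i;i}_T(\Delta X_T)^2\big]$; and (iii) the linear control-cost terms $\E\big[\int_{[0,T]} c^{i,+}_t\,d(\b\xi^i-\xi^i)_t + \int_{[0,T]} c^{i,-}_t\,d(\b\zeta^i-\zeta^i)_t\big]$, which already coincide with the last line of the claimed inequality.

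The core of the argument is to rewrite (i) as $\E[\int_{[0,T]} Y^{i,\eta}_t\,d(\delta v)_t]$. Substituting the representation of $\Delta X$ and interchanging the order of integration over $\{0\le s\le t\le T\}$ by Fubini's theorem (licit by the integrability just noted), term (i) becomes
\[
2\,\E\bigg[\int_{[0,T]} \Big( \Gamma^i_{s,T} Q^{i;i}_T X_T + \int_s^T \Gamma^i_{s,t} Q^{i;i}_t X_t\,dt \Big)\, d(\delta v)_s \bigg].
\]
Since $\delta v$ is $\F$-adapted of integrable variation while the bracketed integrand is anticipating, I would pass to the optional projection via the identity $\E[\int_{[0,T]} g_s\,dA_s] = \E[\int_{[0,T]} \E[g_s\mid\cF_s]\,dA_s]$, valid for an adapted c\`adl\`ag finite-variation integrator $A$ and an integrable process $g$. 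Applying it with $A=\delta v$ and recognizing from \eqref{eq BSDE adjoint} that $Y^{i,\eta}_s = 2\,\E\big[\Gamma^i_{s,T} Q^{i;i}_T X_T + \int_s^T \Gamma^i_{s,t} Q^{i;i}_t X_t\,dt \mid \cF_s\big]$ shows that term (i) equals $\E[\int_{[0,T]} Y^{i,\eta}_t\,d(\delta v)_t]$ exactly. Finally, term (ii) is nonnegative by the standing nonnegativity hypothesis on the diagonal coefficients $q^{i,i;i}_t$ (equivalently, convexity of $J^i$ in player $i$'s own control), so discarding it yields the asserted inequality.

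I expect the main obstacle to be the rigorous justification of the projection step, namely the identity $\E[\int g_s\,dA_s] = \E[\int \E[g_s\mid\cF_s]\,dA_s]$ for the adapted bounded-variation integrator $\delta v$ and the anticipating integrand $g$, together with the bookkeeping guaranteeing that Fubini's theorem and all conditional expectations are well defined; this is precisely where the square-integrability of $\A^2$ and the boundedness of $a^i,b^i,\sigma^i$ enter. The algebraic linearization and the variation-of-constants formula are routine by comparison.
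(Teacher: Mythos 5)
Your proposal is correct and follows essentially the same route as the paper's proof: the variation-of-constants representation $\b X^i_t - X^i_t=\int_{[0,t]}\Gamma^i_{s,t}\,d(\b v^i-v^i)_s$, the interchange of integrals (done in the paper by integration by parts, by you via Fubini — the same computation), and the projection identity $\E[\int g_s\,dA_s]=\E[\int \E[g_s\mid\cF_s]\,dA_s]$, which is precisely the paper's appeal to Theorem 1.33 of Jacod, yielding $Y^{i,\eta}$ from \eqref{eq BSDE adjoint}. Your exact quadratic expansion with the discarded remainder $q^{i,i;i}_t(\Delta X_t)^2\geq 0$ is equivalent to the paper's convexity argument for $x\mapsto xQ^i_tx$ (and, if anything, isolates the minimal hypothesis actually needed, namely nonnegativity of the diagonal entries, which Assumption \ref{assumption} guarantees via $\bar Q_t^{i,i}=q^{i,i;i}_t\geq\kappa$).
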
 
\begin{proof}
Take $i \in \{1,...,N\}$, $\eta = (\eta^1, ..., \eta^N) \in \A^{2N}$ and $\b \xi ^i, \b \zeta ^i \in \A$.
In order to simplify the notation, set $ X := X^\eta, \ Y^i := Y^{i,\eta}$ and denote by $\b X^i$ the solution to the SDE
$$
d \b X_t^{i}= ( a^i_t + b^i_t \b X_t^{i})dt + \sigma_t^i dW_{t}^i + d \b \xi _{t}^i - d \b \zeta ^i_t, \quad \b X _{0-}^{i}=x^i_0.
$$

For later use, we first show the following elementary identity:
\begin{align}\label{eq key idenity}
\E \bigg[ \int_{[0,T]} Y^i_t  d (\b v^i- v^i)_t \bigg] 
= \E \bigg[ \int_0^T  2 Q_t^{i;i} X_t (\b X^i_t - X ^i_t) dt + 2 Q_T^{i;i} X_T (\b X^i_T - X ^i_T) \bigg]. 
\end{align} 
Indeed, recalling the definition of $(\Gamma^i_{t,s})_{t,s}$ in \eqref{eq Gamma} and using that
$$
\Gamma^i_{t,0} ( \b X^i_t - X ^i_t) = \int_{[0,t]} \Gamma^i_{s,0} d( \b v^i- v^i)_s,
$$
via an integration by parts we obtain
\begin{align}\label{eq intermediate key}
 \E \bigg[ \int_0^T 2Q_t^{i;i} & X_t^i ( \b X^i_t - X ^i_t) dt + 2 Q_T^{i;i} X_T (  \b X^i_T - X ^i_T) \bigg]\\ \notag
 &= \E \bigg[ \int_0^T \Big( -\int_t^T \Gamma_{0,s}^i 2 Q_s^{i;i} X_s ds \Big)' \Big( \int_{[0,t]} \Gamma^i_{s,0} d( \b v^i- v^i)_s \Big) dt \\ \notag
& \quad \quad +  \Gamma_{0,T}^i 2 Q_T^{i;i} X_T \Big( \int_{[0,T]} \Gamma^i_{t,0} d( \b v^i- v^i)_t \Big) \bigg] \\ \notag
& = \E \bigg[ \int_{[0,T]} \Big( \int_t^T \Gamma_{t,s}^i 2 Q_s^{i;i} X_s ds +  \Gamma_{t,T}^i2 Q_T^{i;i} X_T \Big)  d ( \b v^i- v^i)_t \bigg]. \notag
\end{align}
Moreover, Theorem 1.33 in \cite{jacod1979calcul},  implies that
\begin{align*}
&\E \bigg[ \int_{[0,T]} \Big( \int_t^T \Gamma_{t,s}^i 2 Q_s^{i;i} X_s ds +  \Gamma_{t,T}^i 2 Q_T^{i;i} X_T  \Big)  d ( \b v^i- v^i)_t \bigg] \\
& \quad = \E \bigg[ \int_{[0,T]} \E \Big[ \int_t^T \Gamma_{t,s}^i 2 Q_s^{i;i} X_s ds +  \Gamma_{t,T}^i2 Q_T^{i;i} X_T \Big| \cF _t \Big]  d ( \b v^i- v^i)_t \bigg], 
\end{align*}
which, together with \eqref{eq intermediate key} and \eqref{eq BSDE adjoint}, gives \eqref{eq key idenity}.

Now, by the convexity of the maps $x \mapsto x Q^i_t x$,  thanks to \eqref{eq key idenity} we have
\begin{align*}
J^i(\b \xi ^i, \b \zeta ^i; \eta^{-i}) &- J^i(\xi ^i, \zeta ^i; \eta^{-i}) \\
& \geq \E \bigg[ \int_0^T  2 Q_t^{i;i} X_t ( \b X^i_t -  X ^i_t) dt + 2 Q_T^{i;i} X_T ( \b X^i_T -  X ^i_T)  \bigg] \\
& \quad \quad  + \E \bigg[ \int_{[0,T]}  c_t^{i,+} d( \b \xi^i -  \xi^i)_t + \int_{[0,T]}  c_t^{i,-} d( \b \zeta^i -  \zeta^i)_t  \bigg]\\
& =  \E \bigg[ \int_{[0,T]} Y_t^{i,\eta}d(\b v^i - v^i)_t \bigg] \\
 &  \quad \quad  + \E \bigg[ \int_{[0,T]}  c_t^{i,+} d( \b \xi^i -  \xi^i)_t + \int_{[0,T]}  c_t^{i,-} d( \b \zeta^i - \zeta^i)_t  \bigg],
\end{align*}
completing the proof of the lemma.

\end{proof}

Next, we state the following version of the stochastic maximum principle, characterizing the Nash equilibria in terms of the related adjoint processes. 
Such a theorem was originated in \cite{Bank&Riedel01} for singular control problems, and we refer to \cite{Wang&Wang&Teo18} for a more general version in a game-context (which contains Theorem \ref{thm SMP} below as a particular case).
In light of Lemma \ref{lemma subgradient identities}, the proof of the following result is straightforward, and it is therefore omitted. 
\begin{theorem}\label{thm SMP}
    The admissible profile strategy $ \eta = (\xi, \zeta) \in\A^{2N}$ is a Nash equilibrium if and only if, 
    for any $i=1,...,N$, the following conditions hold:
    \begin{enumerate}
        \item $ Y ^{i,\eta}_t + c_t^{i,+} \geq 0$ and $-  Y ^{i,\eta}_t + c_t^{i,-} \geq 0$, for any $t \in [0,T], \ \mathbb P$-a.s.;
        \item $\int_{[0,T]} ( Y ^{i,\eta}_t + c_t^{i,+}) d \xi ^i_t = 0$ and $\int_{[0,T]} (- Y ^{i,\eta}_t + c_t^{i,-}) d \zeta ^i_t =0$, $\mathbb P $-a.s.
    \end{enumerate} 
\end{theorem}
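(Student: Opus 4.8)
The plan is to obtain both implications from the subgradient identity of Lemma~\ref{lemma subgradient identities}. Fix $i$ and abbreviate $Y:=Y^{i,\eta}$, $v:=\xi^i-\zeta^i$, and for a deviation $(\bar\xi^i,\bar\zeta^i)\in\A^2$ set $\bar v:=\bar\xi^i-\bar\zeta^i$. The step common to both directions is to rewrite the lower bound of Lemma~\ref{lemma subgradient identities} by means of $d(\bar v-v)=d(\bar\xi^i-\xi^i)-d(\bar\zeta^i-\zeta^i)$, so that it separates into the two controls:
\[
\begin{aligned}
J^i(\bar\xi^i,\bar\zeta^i;\eta^{-i})-J^i(\xi^i,\zeta^i;\eta^{-i})\ \geq\ \E\Big[&\int_{[0,T]}(Y_t+c_t^{i,+})\,d(\bar\xi^i-\xi^i)_t\\
&+\int_{[0,T]}(c_t^{i,-}-Y_t)\,d(\bar\zeta^i-\zeta^i)_t\Big].
\end{aligned}
\]

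For sufficiency, assume (1)--(2) and expand the right-hand side by splitting $d(\bar\xi^i-\xi^i)=d\bar\xi^i-d\xi^i$ and likewise for $\zeta$. The complementary-slackness conditions (2) annihilate the two integrals against $d\xi^i$ and $d\zeta^i$, while the sign conditions (1), together with the monotonicity of $\bar\xi^i,\bar\zeta^i$ (so that $d\bar\xi^i,d\bar\zeta^i\geq0$), render the two remaining integrals nonnegative. Hence the right-hand side is nonnegative, giving $J^i(\bar\xi^i,\bar\zeta^i;\eta^{-i})\geq J^i(\xi^i,\zeta^i;\eta^{-i})$ for every admissible deviation, i.e.\ $\eta$ is a Nash equilibrium.

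For necessity, the first task is to upgrade the one-sided bound of the lemma into the exact first-order condition. Since the state depends affinely on the control and the costs are convex quadratic, one has the exact expansion
\[
J^i(\xi^i+\theta h,\zeta^i+\theta g;\eta^{-i})=J^i(\xi^i,\zeta^i;\eta^{-i})+\theta\,L(h,g)+\theta^2 R(h,g),
\]
where $L(h,g)$ denotes the linear functional on the right-hand side above (with $h=\bar\xi^i-\xi^i$, $g=\bar\zeta^i-\zeta^i$) and $R(h,g)\geq0$ is the quadratic remainder produced by the second-order term $q^{i,i;i}(\bar X^i-X^i)^2$ in the expansion of $x\mapsto xQ^i_t x$, with the first-order part identified through \eqref{eq key idenity}; here $q^{i,i;i}\geq0$ by convexity. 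As $(1-\theta)(\xi^i,\zeta^i)+\theta(\bar\xi^i,\bar\zeta^i)$ is again admissible for $\theta\in(0,1)$, minimality of the equilibrium control forces $\theta L(h,g)+\theta^2 R(h,g)\geq0$; dividing by $\theta$ and letting $\theta\downarrow0$ yields $L(\bar\xi^i-\xi^i,\bar\zeta^i-\zeta^i)\geq0$ for every $(\bar\xi^i,\bar\zeta^i)\in\A^2$.

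It then remains to read the conditions (1)--(2) off the inequality $L\geq0$ through suitable deviations, and I expect this measure-theoretic step to be the main obstacle. For the sign condition (1) I would keep $\bar\zeta^i=\zeta^i$ and take $\bar\xi^i=\xi^i+\mathbf 1_{[s,T]}\mathbf 1_F$ with $F\in\cF_s$; since $Y$ has a continuous version, the resulting mass at $s$ gives $\E[(Y_s+c_s^{i,+})\mathbf 1_F]\geq0$ for all $F\in\cF_s$, hence $Y_s+c_s^{i,+}\geq0$ almost surely, and ranging $s$ over a countable dense set and using continuity upgrades this to $Y_t+c_t^{i,+}\geq0$ for all $t$, a.s.; the symmetric perturbation of $\zeta^i$ gives $c_t^{i,-}-Y_t\geq0$. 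For the complementary-slackness identities (2) I would instead take $\bar\xi^i=0$, $\bar\zeta^i=\zeta^i$, so that $L\geq0$ becomes $-\E[\int_{[0,T]}(Y_t+c_t^{i,+})\,d\xi^i_t]\geq0$; since (1) makes the integrand nonnegative and $d\xi^i\geq0$, the integral is nonnegative a.s.\ with nonpositive expectation, forcing $\int_{[0,T]}(Y_t+c_t^{i,+})\,d\xi^i_t=0$ a.s., and symmetrically for $\zeta^i$. The delicate points throughout are the localization arguments: the adaptedness and continuity of $Y$, the admissibility of the perturbations, and the passage from expectations over $\cF_s$-measurable events to statements holding pointwise in $t$ and almost surely; by comparison the sufficiency direction and the expansion are routine.
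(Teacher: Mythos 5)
Correct, and essentially the argument the paper intends: the paper omits the proof of Theorem \ref{thm SMP} as a direct consequence of Lemma \ref{lemma subgradient identities}, and your two directions (sufficiency by splitting the subgradient bound and using conditions (1)--(2), necessity via the exact quadratic expansion in $\theta$ with nonnegative remainder from $q^{i,i;i}\geq 0$, followed by the jump perturbations $\bar\xi^i=\xi^i+\mathbf{1}_{[s,T]}\mathbf{1}_F$, $F\in\cF_s$, and a density-plus-continuity upgrade) are precisely the standard Bank--Riedel route the paper points to. One small repair: the deviation $(\bar\xi^i,\bar\zeta^i)=(0,\zeta^i)$ used for complementary slackness need not lie in $\A^2$, since admissibility controls only the difference $\xi^i-\zeta^i$ and not $\zeta^i$ alone; taking $(\bar\xi^i,\bar\zeta^i)=(0,0)$ instead is trivially admissible and, combined with the already-established sign condition (1), forces both integrals in (2) to vanish simultaneously.
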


\section{Existence of Nash equilibria}\label{section existence}

\subsection{Assumptions and main result}
Define  matrix-valued functions $ \hat Q, \b Q :[0,T] \to \R^{N\times N}$ as 
\begin{equation}
    \label{eq def matrix bar Q}
\hat Q _t^{k,j} =  2 q_t^{k,j;k}, 
\quad \text{and} \quad  
\bar Q _t^{k,j} =
\begin{cases}
    q_t^{k,k;k} & \text{if} \quad k = j, \\ 
     2 q_t^{k,j;k}   & \text{if} \quad { k\ne j}.
\end{cases}
\end{equation}

We now summarize the sufficient conditions for the existence of Nash equilibria.
\begin{assumption}\label{assumption} For any $i=1,...,N$, we require that:
\begin{enumerate}
    \item The functions $a^i, b^i, \sigma^i, q^{k,j;i}: [0,T] \to \mathbb{R}$ are bounded, for any $k,j=1,...,N$;
    \item\label{condition c} The functions $c^{i,+}, c^{i,-}: [0,T] \to  (0,\infty)$ are continuous;
    \item\label{condition Q i symmetric} For any $t\in [0,T]$, the matrix $Q_t^i$ is symmetric; 
    \item\label{condition Q bar Q >0} For any $t\in [0,T]$, the matrix $\bar Q _t$ is positive definite (hence, also $\hat Q _t$ is positive definite); i.e., there exists $\kappa > 0$ such that $ x\bar Q _t x \geq \kappa |x|^2$ for any $x\in \R^N$.
\end{enumerate}
\end{assumption} 

It is worth to underline that some of these requirements are in place for convenience of exposition (in particular, the symmetry of $Q^i$): a model in which these are violated is discussed in Section \ref{subsection oligopoly investment games}. 
\begin{remark}
Clearly, the most restrictive hypothesis is the positive definiteness of the matrices $\b Q ^i$.
On the one hand, it implies that, for any $x \in \R^N$, we have $x Q ^i x \geq C (|x^i|^2 - |x^{-i}|^2)$ for some $C>0$. 
This condition is quite standard in singular control (see \cite{dianetti2021multidimensional}, among others) and it allows to prove the existence of the optimal controls for the single-player optimization problems (i.e., for the control problems $\inf_{\eta^i} J^i(\eta^i, \eta^{-i})$, parametrized by $\eta^{-i}$).
On the other hand, the assumption on the matrix $\b Q$ represents a coercivity condition on the space of profile strategies and it ensures that the Nash equilibria, whenever they exist, always live in a bounded subset of $\A^{2N}$ (see the a priori estimates in Lemma \ref{lemma a priori estimates} below).  
This assumption is different from more typical requirements in LQ games, which instead implies a certain monotonicity of the associated forward-backward system of equations (see Sections 5.2.2 and 5.4.3 in \cite{Carmona2016}). 
\end{remark}

We now state the main result of this paper.
\begin{theorem}\label{thm main}
Under Assumption \ref{assumption}, there exists a Nash equilibrium. 
\end{theorem}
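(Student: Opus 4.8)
The plan is to obtain the equilibrium as a limit of equilibria of regularised games, using Theorem \ref{thm SMP} both to characterise the approximating equilibria and to certify the limit. First I would introduce, for each $n\in\N$, the restricted set of $n$-Lipschitz strategies
\[
\A_n := \Big\{ (\xi,\zeta) \in \A^2 : \xi_t = \int_0^t u_s\,ds,\ \zeta_t = \int_0^t w_s\,ds,\ u,w\ \F\text{-progr.\ meas.},\ 0 \le u,w \le n \Big\},
\]
and consider the game in which each player $i$ is constrained to $\eta^i\in\A_n$. This is a classical stochastic differential game with drift controlled through the compact action set $[0,n]^2$ and cost \eqref{eq cost functional}, the singular integrals collapsing to $\int_0^T (c^{i,+}_t u^i_t + c^{i,-}_t w^i_t)\,dt$. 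Since the associated Hamiltonian is affine in $(u^i,w^i)$, its pointwise minimisation is of bang-bang type, so the $n$-th game falls within the framework of \cite{hamadene.mannucci2019}; invoking that existence result yields, for every $n$, a Nash equilibrium $\eta^n=(\xi^n,\zeta^n)\in\A_n^N$.

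Next I would establish estimates that are uniform in $n$ — this is the content of Lemma \ref{lemma a priori estimates}. The mechanism is that the coercivity $x\bar Q_t x \ge \kappa|x|^2$ furnished by condition \ref{condition Q bar Q >0} of Assumption \ref{assumption}, together with the strictly positive linear costs of condition \ref{condition c}, turns the equilibrium first-order conditions (via the subgradient inequality of Lemma \ref{lemma subgradient identities}, testing optimality against the do-nothing deviation and using the aggregated quadratic structure encoded in $\hat Q,\bar Q$) into a bound, with constant $C$ independent of $n$,
\[
\sup_{n}\ \E\Big[ \int_0^T |X^{\eta^n}_t|^2\,dt + |X^{\eta^n}_T|^2 + \sum_{i=1}^N \big( \xi^{i,n}_T + \zeta^{i,n}_T \big) \Big] \le C.
\]
These bounds confine every equilibrium to a fixed bounded subset of $\A^{2N}$.

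Finally I would pass to the limit. The total-variation bound lets me extract a subsequence along which the measures $d\xi^{i,n}, d\zeta^{i,n}$ converge weakly on $[0,T]$ to some $d\xi^i, d\zeta^i$, producing a candidate $\eta=(\xi,\zeta)\in\A^{2N}$; by linearity of \eqref{eq SDE} the states converge, $X^{\eta^n}\to X^{\eta}$, and hence by the explicit representation \eqref{eq BSDE adjoint} so do the adjoint processes, $Y^{i,\eta^n}\to Y^{i,\eta}$. The $n$-constrained equilibria are bang-bang: $u^{i,n}_t=n$ where $Y^{i,\eta^n}_t+c^{i,+}_t<0$ and $u^{i,n}_t=0$ where it is positive (and symmetrically for $w^{i,n}$ with $-Y^{i,\eta^n}_t+c^{i,-}_t$). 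The uniform bound on $\E[\xi^{i,n}_T]$ forces $n\,\E[\int_0^T \mathbf 1_{\{Y^{i,\eta^n}_t+c^{i,+}_t<0\}}\,dt]\le C$, so the negativity set shrinks and, in the limit, condition (1) of Theorem \ref{thm SMP} holds. Condition (2) then follows by a squeeze: the bang-bang structure gives $\int_{[0,T]}(Y^{i,\eta^n}_t+c^{i,+}_t)\,d\xi^{i,n}_t\le 0$, while condition (1) together with $d\xi^i\ge 0$ forces the limiting pairing to be $\ge 0$, and a semicontinuity argument in the passage to the limit closes the gap to equality. The sufficiency direction of Theorem \ref{thm SMP} then certifies $\eta$ as a Nash equilibrium.

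The hard part will be this last passage. Condition (2) pairs the weak-$*$ limit of the measures $d\xi^{i,n}$ against the limiting integrand $Y^{i,\eta}+c^{i,+}$, and weak-$*$ convergence of measures does not in general commute with limits of the integrands, all the more so because mass of $d\xi^i$ may concentrate at jump times of the limiting singular control — jumps that are absent in each regular approximation. Making the squeeze rigorous requires controlling the joint convergence of the adjoint processes in a suitably strong (uniform in $t$, in probability) sense and exploiting the sign information so that no mass of $d\xi^i$ sits where $Y^{i,\eta}+c^{i,+}>0$; this is the technically heaviest point of the argument.
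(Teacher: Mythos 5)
Your proposal reproduces the paper's architecture faithfully (Lipschitz approximation, existence for the $n$-th game via \cite{hamadene.mannucci2019}, coercivity-based a priori estimates, limit passage certified by Theorem \ref{thm SMP}), but it has a genuine gap exactly at the point you yourself flag: the complementary slackness condition (2) in the limit. You propose to close it by upgrading the convergence of the adjoint processes to a strong (uniform in $t$, in probability) sense, but no such convergence is available from the a priori estimates — the bounds only yield \emph{weak} $L^2$-type convergence of $(X^n,Y^n)$, and mass of $d\xi^i$ can indeed concentrate at jump times absent from every Lipschitz approximation, so the squeeze as you set it up cannot be made rigorous along these lines. The paper avoids the pairing problem altogether: by the integration-by-parts identity \eqref{eq key idenity}, the troublesome quantity $\sum_{i}\E\big[\int_{[0,T]} Y^{i,n}_t\,d(\xi^{i,n}-\zeta^{i,n})_t\big]$ is rewritten as a purely quadratic functional of the states, $\E\big[\int_0^T (X^n_t \hat Q_t X^n_t - \tilde X^n_t \hat Q_t X^n_t)\,dt + X^n_T \hat Q_T X^n_T - \tilde X^n_T \hat Q_T X^n_T\big]$, where $\tilde X^n$ is the uncontrolled state. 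The cross term passes to the limit because $\tilde X^n\to\tilde X$ strongly and $X^n\to X$ weakly, while the convex term $x\mapsto x\hat Q x$ (positive semidefiniteness following from Assumption \ref{assumption}) is weakly lower semicontinuous; the cost terms $\int c^{i,\pm}\,d\xi^{i,n}$ converge along Ces\`aro means since $c^{i,\pm}$ are \emph{fixed} continuous functions. Combining with the bang-bang identity $\int_{[0,T]}(Y^{i,n}_t+c^{i,+}_t)\,d\xi^{i,n}_t = -n\int_0^T (Y^{i,n}_t+c^{i,+}_t)^-\,dt \le 0$ yields the squeeze without ever pairing a limit measure against a limit integrand. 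This identity is the key missing idea in your write-up.

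Two further points need repair. First, your compactness step is not justified: $\sup_n\E[\xi^{i,n}_T]<\infty$ is a bound in expectation only, so you cannot extract one subsequence along which the random measures $d\xi^{i,n}(\omega)$ converge weakly-$*$ for $\P$-a.e.\ $\omega$; the paper instead invokes a Koml\'os-type result (Lemma 3.5 in \cite{K}), giving a.s.\ convergence of \emph{Ces\`aro means} $\bar\xi^{i,m}$ at continuity points, and a Banach--Saks argument on $v^{i,n}=\xi^{i,n}-\zeta^{i,n}$ in $\mathbb H^2_T$ to identify $v^i=\xi^i-\zeta^i$ and obtain admissibility of the limit; all subsequent identifications (the FBSDE \eqref{eq FBSDE limit}, condition (1)) are then carried out by weak testing against $\mathbb H^2_T$. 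Second, $\sigma^i$ is allowed to be degenerate, whereas \cite{hamadene.mannucci2019} requires uniform ellipticity: the paper perturbs the volatility to $\sigma^{i,n}_t=\sigma^i_t\vee\tfrac1n$ in the $n$-th game and removes the perturbation in the limit, a step your proposal omits. Finally, your condition-(1) argument via the bound $n\,\E[\int_0^T \mathds 1_{\{Y^{i,n}_t+c^{i,+}_t<0\}}\,dt]\le C$ only controls the \emph{measure} of the negativity set; to conclude you need the stronger $L^1$ statement $\lim_n \E[\int_0^T (Y^{i,n}_t+c^{i,+}_t)^-\,dt]=0$ (which the paper gets directly, via $n\,\E[\int_0^T (Y^{i,n}_t+c^{i,+}_t)^-\,dt]\le C$) and then a careful use of weak convergence, testing $Y^{i,n}$ against $\mathds 1_{\{Y^i_t+c^{i,+}_t\le 0\}}$, since pointwise convergence of $Y^{i,n}$ is again unavailable.
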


The proof of Theorem \ref{thm main} is given in the next subsection (see Subsection \ref{subsection proof of thm main}), and it consists of several steps. 
We resume here the key ideas. 
First, we introduce a sequence of approximating games where, for any $n\in \mathbb{N}$, players are restricted to pick strategies $\xi^i, \zeta^i \in \tilde{\A}$ with Lispchitz constant bounded by $n$. 
For fixed $n$, this approximating problem falls into the class of games with bang-bang controls and we can employ the results in \cite{hamadene.mannucci2019} in order to show the existence of a Nash equilibrium $\eta^n = (\eta^{i,n},...,\eta^{N,n})$.
We then show some a priori estimates on the sequence $(\eta^n)_n$, which in turn allow to find an accumulation point $\eta$.
Finally, we prove that the limit point satisfies the conditions of Theorem \ref{thm SMP}, hence it is a Nash equilibrium.

\subsection{Proof of Theorem \ref{thm main}}\label{subsection proof of thm main} 
In the following subsections we will prove Theorem \ref{thm main}, and Assumption \ref{assumption} will be in force. 
During the proofs, $C>0$ will denote a generic constant, which might change from line to line. 

\subsubsection{Nash equilibria for a sequence of approximating games} 
Define, for each $n\geq1$, the $n$-\emph{Lipschitz game} as the game in which, for any $i=1,...,N$,  player $i$ is allowed to chose strategies $\xi^i$ and $\zeta^i$ in the space of $n$-Lipschitz strategies 
$$
\A _n :=\{ \xi \in \tilde{\A} \text{ with Lipschitz constant bounded by $n$ and $\xi_0=0$} \}.
$$
For a given profile strategy $\eta= (\xi, \zeta) \in \A_n^{2N}$, player $i$ minimizes the cost $J^i$ defined as in \eqref{eq cost functional}, in which the state equation is replaced by the controlled SDE
\begin{equation}\label{eq SDE n}
 dX_t^{i,\eta}=( a^i_t + b^i_t X_t^{i,\eta} )ds + \sigma^{i,n}_t dW_{t}^i + d\xi_{t}^i - d \zeta^i_t, \quad X_{0}^{i,\eta}=x^i_0, 
\end{equation}   
with strictly elliptic diffusion term 
\begin{equation}
    \label{eq ellipticity n} 
     \sigma_t^{i,n} := \sigma_t^i \lor \frac1n.
\end{equation}

We first have the following existence result.
\begin{proposition}\label{prop NE Lip}
For any $n \geq 1$, there exists a Nash equilibrium $\eta ^n = ( \eta ^{1,n},..., \eta ^{N,n}) \in \A_n^{2N}$ of the $n$-Lipschitz game; that is, $\eta ^n  \in \A_n^{2N}$ such that
$$ 
J^i (  \eta ^{i,n},  \eta ^{-i, n}) \leq J^i (\bar \eta ^{i}, \eta ^{-i,n}), \quad \text{for any $\bar \eta ^i \in \A_n^2,$}
$$
for any $i =1,...,N$.
\end{proposition}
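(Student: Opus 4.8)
The plan is to recast the $n$-Lipschitz game as a classical non-zero-sum stochastic differential game with \emph{bounded regular controls} and then invoke the existence result of \cite{hamadene.mannucci2019}. First I would exploit that any $\xi^i \in \A_n$ is absolutely continuous with $\xi^i_0 = 0$ and derivative valued in $[0,n]$: writing $\xi^i_t = \int_0^t \alpha^i_s\,ds$ and $\zeta^i_t = \int_0^t \beta^i_s\,ds$ yields a bijection between strategies $(\xi^i,\zeta^i)\in \A_n^2$ and pairs of $\F$-progressively measurable controls $(\alpha^i,\beta^i)$ valued in $[0,n]^2$. Under this identification the state equation \eqref{eq SDE n} becomes the controlled diffusion $dX^{i}_t = (a^i_t + b^i_t X^i_t + \alpha^i_t - \beta^i_t)\,dt + \sigma^{i,n}_t\,dW^i_t$, and, since $\xi^i_0=\zeta^i_0=0$, the singular part of the cost in \eqref{eq cost functional} reduces to the running cost $\int_0^T (c^{i,+}_t \alpha^i_t + c^{i,-}_t \beta^i_t)\,dt$. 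Hence player $i$ faces a standard control problem with drift affine in the state and the control, running cost quadratic in the state and linear in the control, and terminal cost quadratic in the state.

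Next I would verify that this reformulated game satisfies the hypotheses of \cite{hamadene.mannucci2019}. The control sets $[0,n]^2$ are compact and convex; the coefficients $a^i,b^i,c^{i,\pm}$ and the entries of $Q^i$ are bounded by Assumption \ref{assumption}(1); and, crucially, the mollified diffusion coefficient is strictly elliptic, $\sigma^{i,n}_t \geq 1/n > 0$ by \eqref{eq ellipticity n}. This non-degeneracy is what underpins the weak, Girsanov-based formulation of \cite{hamadene.mannucci2019}: on a reference probability under which each $X^i$ is the driftless Gaussian process $x_0^i + \int_0^t \sigma^{i,n}_s\,dW^i_s$, the controlled drift is absorbed through a change of measure whose density, having bounded exponent, satisfies Novikov's condition and possesses moments of every order. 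In particular $\sup_{t}\E[|X^\eta_t|^p]<\infty$ for every $p$, so the quadratic state costs are integrable and of the polynomial-growth type covered by the cited framework.

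The decisive point is the \emph{generalized Isaacs condition}: the existence of a measurable selection of Nash equilibria for the family of static games attached to the Hamiltonians. Here this is essentially free, because the controls are \emph{decoupled across players}: player $i$'s control $(\alpha^i,\beta^i)$ enters only the drift of $X^i$ and only the running cost of $J^i$, so player $i$'s Hamiltonian, viewed as a function of the full control profile, depends on it only through $(\alpha^i,\beta^i)$ and is moreover affine in this pair. Consequently the associated static game splits into $N$ independent minimizations of a linear functional over the compact convex set $[0,n]^2$; each admits a bang-bang minimizer, and a jointly measurable selection exists by standard measurable-selection arguments. This delivers, via \cite{hamadene.mannucci2019}, an equilibrium profile of feedback controls; integrating them produces strategies $\eta^n \in \A_n^{2N}$, which are automatically square integrable, being Lipschitz on the compact interval $[0,T]$, and hence admissible.

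The hard part is not the game-theoretic fixed point, which the product structure trivializes, but the careful matching of the present setting to the precise regularity, growth and non-degeneracy hypotheses of \cite{hamadene.mannucci2019}, together with the verification that the weak-formulation equilibrium is realized on the given filtered space $(\Omega,\cF,\F,\P)$. Because $\sigma^{i,n}$ is deterministic and bounded away from zero while the drift modification is bounded, Girsanov keeps the filtration generated by $X$ equal, up to null sets, to $\F$, so the equilibrium controls are genuinely $\F$-adapted and the strong and weak formulations agree in law; checking these points rigorously, alongside the moment bounds needed for the quadratic cost, is where the bulk of the technical work lies.
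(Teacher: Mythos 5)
Your proposal follows essentially the same route as the paper: rewriting each $n$-Lipschitz strategy as $\xi^i_t=\int_0^t u^i_s\,ds$, $\zeta^i_t=\int_0^t w^i_s\,ds$ with $[0,n]$-valued progressively measurable controls, noting that the mollified diffusion $\sigma^{i,n}_t=\sigma^i_t\lor\frac1n$ is uniformly elliptic, invoking Theorem 4.1 of \cite{hamadene.mannucci2019} for the resulting bang-bang regular-control game, and integrating the equilibrium controls back to obtain $\eta^n\in\A_n^{2N}$. The paper's proof is exactly this, stated more briefly; your additional verification of the generalized Isaacs condition via the decoupled, affine structure of the Hamiltonians, and of the moment and adaptedness issues in the weak formulation, is a correct fleshing-out of what the citation implicitly carries.
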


\begin{proof} 
For each $n$, the $n$-Lipschitz game can be reformulated as a stochastic differential game with regular controls by setting
$$
u_t^i:=\frac{d\xi_t^i}{d t} 
\quad \text{and} \quad
w_t^i:=\frac{d\zeta_t^i}{d t}.
$$
Indeed, in the $n$-Lipschitz game player $i$ chooses a strategies $u^i, w^i$ in the set
$$
\mathcal{U}_n:=\{ \text{progressively measurable processes $u$ with $0 \leq u_t \leq n, \ \mathbb{P}\otimes dt$-a.e.} \}
$$ 
in order to minimize the expected cost
\begin{align}\label{eq SDG regular controls}
& J^i(\alpha^i,\alpha^{-i}) := \mathbb{E} \bigg[ \int_0^T \big( X_t^\alpha Q^i_t X_t^\alpha + c_t^{i,+} u^i_t + c_t^{i,-} w^i_t \big) dt
+ X_T^\alpha Q^i_T X_T^\alpha \bigg],\\ \notag
&\text{subject to} \quad dX_t^{j,\alpha}=( a^j_t + b^j_t X_t^{j,\alpha} + u_t^j - w_t^j)ds + \sigma^{j,n}_t dW_{t}^j, \ X_{0}^{j,\alpha}=x^j_0, \ j=1,...,N. \notag  
\end{align} 
Here, we use the notation  
$$
\alpha : =(\alpha^1,...,\alpha^N) := ((u^1,w^1),...,(u^N,w^N)) \quad \text{and} \quad X^{\alpha} := (X^{1,\alpha},...,X^{N,\alpha}).
$$ 
Thanks to the uniform ellipticity enforced in \eqref{eq ellipticity n}, we can employ Theorem 4.1 in \cite{hamadene.mannucci2019} in order to deduce that, for any $n$ there exists a Nash equilibrium 
 $\alpha^n = (\alpha^{1,n},..., \alpha^{N,n})$, with 
$\alpha^{i,n} = (u^{i,n}, w^{i,n}) \in \mathcal U _n^2$.
Hence, defining the processes
\begin{equation}\label{eq xi n zeta n first time}
\xi^{i,n}_t := \int_0^t u_s^{i,n} ds \quad \text{and} \quad \zeta^{i,n}_t := \int_0^t w_s^{i,n} ds,  
\end{equation}
we have that $\eta^n :=(\eta^{1,n},..., \eta^{N,n})$ is a Nash equilibrium for the $n$-Lipschitz game, with $\eta^{i,n} := (\xi^{i,n}, \zeta^{i,n})$.
\end{proof}

For any $n \in \mathbb N$, we can now fix a Nash equilibrium $\eta^n \in \mathcal A _n^{2N}$, which is given in terms of the equilibrium $\alpha^n$ of the game in \eqref{eq SDG regular controls}.
Thus, we proceed by characterizing such these equilibria by using the stochastic maximum principle (see Chapter 5 in \cite{Carmona2016}).

When viewed as a stochastic differential game with regular controls (see \eqref{eq SDG regular controls}), the pre-Hamiltonians of the $n$-Lipschitz game write as
$$
H^{i,n}(t,x,u, w ;y^1,...,y^N):=  \sum_{j=1}^N (a^j_t  + b^j_t x^j + u^j -w^j) y^j + x Q^i_t x + c_t^{i,+} u^i + c_t^{i,-} w^i, 
$$
for any $i=1,...,N$, $(t,x) \in[0,T] \times \R^N$, $u=(u^1,...,u^N), \, w=(w^1,...,w^N)  \in [0,n]^N$ and $y:=(y^1,...,y^N) \in \R^N$.
In particular, the function $H^{i,n}$ represents the pre-Hamiltonian related to the optimization problem of player $i$.

Define the process $( X ^{n}, Y ^{n}, Z ^{n}) = (X^{1,n},...,X^{N,n}, Y^{1,n},...,Y^{N,n},Z^{1,n},...,Z^{N,n})$, in which
$ (X^{i,n},Y^{i,n},Z^{i,n}) \in \mathbb H^2 \times \mathbb H^{2,N} \times \mathbb H^{2,N\times N}$, as the unique solution of the forward-backward stochastic differential equation (FBSDE, in short)
$$
\begin{cases}\label{eq first FBSDE n}   
d X^{i,n}_t & = \big(  a^i_t + b^i_t X ^{i,n}_t + u_t^{i,n} - w_t^{i,n} \big)dt  + \sigma ^{i,n}_tdW^i_t, \quad X_{0}^{i,n}=x^i_0, \quad i=1,...,N,\\ 
d Y^{i,n}_t  & = -  D_x H^{i,n} ( t, X_t^{n}, Y _t^{n}) dt + Z_t^{i,n} dW_t, \quad Y_T^{i,n} = 2 Q^{i}_T X_T^{n}, \quad i=1,...,N.  
\end{cases}
$$
The necessary conditions of the stochastic maximum principle (see Theorem 5.19 at p.\ 187 in \cite{Carmona2016}) characterize the Nash equilibria as the minimizers of the pre-Hamiltonian; that is, for any $i=1,...,N$ we have  
\begin{align}\label{eq first necess cond regular} 
 H^{i,n} & (t,X_t^n, u^{1,n}_t,...,u_t^{N,n}, w_t^{1,n},...,w_t^{N,n} ;Y_t^{1,i,n},...,Y_t^{N,i,n}) \\ \notag
& = \inf _{ u^i, w^i \in [0,n] } H^{i,n}(t,X_t^n, (u^i,u_t^{-i,n}), (w^i,w_t^{-i,n}) ;Y_t^{1,i,n},...,Y_t^{N,i,n}), \quad \mathbb P \otimes dt\text{-a.e.} \notag
\end{align}
Now, we can compute the optimal feedbacks for player $i$, as multivalued functions $ \hat u ^{i,n}$ and $\hat w ^{i,n}$
from $[0,T] \times \R^{K+N} \times [0,n]^{2(N-1)} \times \R^N $ into $[0,n]$.
Indeed, by setting
\begin{align*}
(\hat u ^{i,n}, \hat w^{i,n}) (t,x,u^{-i}, w^{-i} ;&y^1,...,y^N)\\
&:=\argmin_{ u^i, w^i \in [0,n] } H^{i,n}(s,x,u^1,...u^N, w^1,...,w^N ;y^1,...,y^N)  \\
&:= \Big( \argmin_{ u^i \in [0,n] } u^i(y^i + c_t^{i,+}), \argmin_{ w^i \in [0,n] } w^i(-y^i + c_t^{i,-}) \Big), 
\end{align*}
and using the notation $n A := \{ n a \, | \, a \in A \}$ for $A \subset \R$, we obtain
\begin{align*}
\hat u ^{i,n}(t,y^{i}) = n
\begin{cases}
    \{ 1 \} & \text{ if } y^i + c_t^{i,+} < 0 , \\
    [0,1]   & \text{ if } y^i + c_t^{i,+} = 0 , \\
    \{0 \}  & \text{ if } y^i + c_t^{i,+} > 0 , \\
\end{cases}
\end{align*}
and
\begin{align*}
\hat w ^{i,n}(t,y^i)= n
\begin{cases}
    \{ 1 \} & \text{ if } - y^i + c_t^{i,-} < 0 , \\
    [0,1]   & \text{ if } - y^i + c_t^{i,-} = 0 , \\
    \{0 \}  & \text{ if } - y^i + c_t^{i,-} > 0 . \\
\end{cases}
\end{align*}
Hence, the necessary conditions \eqref{eq first necess cond regular} rewrites as
\begin{equation}\label{eq necess cond regular} 
u _t^{i,n} \in \hat u^{i,n} (t,Y_t^{i,i,n})
\quad \text{and} \quad
w _t^{i,n}\in \hat w^{i,n} (t,Y_t^{i,i,n}), \quad \mathbb P \otimes dt\text{-a.e.}.
\end{equation}

Since $\hat u ^{i,n}$ and $\hat w ^{i,n}$ depend only on $Y^{i,i,n}$ and since the equation for $Y^{i,i,n}$ does not depend on $Y^{j,i,n}$ if $j \ne i$, one can reduce the FBSDE \eqref{eq first FBSDE n}.
In particular, with slight abuse of notation (i.e., writing $(Y ^{i,n}, Z ^{i,n})$ instead of $(Y ^{i,i,n}, Z ^{i,i,n})$) we have $ u _t^{i,n} = \hat u ^{i,n}(t, Y_t^{i,n})$ and $w _t^{i,n} = \hat w^{i,n}(t, Y_t^{i,n})$ with 
$( X ^{n},  Y ^{n},  Z ^{n})= (X^{1,n},...,X^{N,n}, Y ^{1,n}, ...,Y ^{N,n}, Z^{1,n}, ..., Z ^{N,n})$ solution to the FBSDE
\begin{equation*}
\begin{cases}   
d  X^{i,n}_t & = \big( a^i_t + b^i_t X ^{i,n}_t+ u_t^{i,n} -w_t^{i,n} \big) dt  + \sigma ^{i,n}_t dW^i_t,  \quad X_{0}^{i,n}=x^i_0, \quad i=1,...,N,\\
d  Y^{i,n}_t  & = - ( 2 Q_t^{i;i}  X _t^{i,n} + b_t^i Y_t^{i,n}) dt +  Z _t^{i,n} dW_t,  \quad Y_T^{i,n} = 2 Q^{i;i}_T X_T^{n}, \quad i=1,...,N. \\  
\end{cases}
\end{equation*} 
Moreover, using \eqref{eq xi n zeta n first time} and noticing that the equations for $Y^{i,n}$ are linear, by using Proposition 6.2.1 at p.\ 142 in \cite{pham2009continuous}, we can rewrite this system as
\begin{equation}\label{eq FBSDE n}
\begin{cases}   
d  X^{i,n}_t & = \big( a^i_t + b^i_t X ^{i,n}_t \big) dt  + \sigma ^{i,n}_t dW^i_t  + d  \xi^{i,n}_t - d \zeta^{i,n}_t, \quad X_{0}^{i,n}=x_0^i, \quad i=1,...,N,\\
Y_t^{i,n} &= 2 \E \Big[ \Gamma^i_{t,T} Q^{i;i}_T X^n_T + \int_t^T \Gamma^i_{t,s}  Q^{i;i}_s X^n_s ds    \Big| \mathcal F _t \Big], \quad i=1,...,N, 
\end{cases}
\end{equation} 
and the necessary conditions for the $n$-Lipschitz game (see \eqref{eq necess cond regular}) translate into
\begin{equation}
    \label{eq necessary cond n}
     \xi^{i,n}_t = \int_0^t u_s^{i,n} ds, \ u_t^{i,n} \in \hat u^{i,n} (t,Y_t^{i,n})
    \quad \text{and} \quad 
    \zeta^{i,n}_t = \int_0^t w_s^{i,n} ds, \ w_t^{i,n} \in \hat w^{i,n} (t,Y_t^{i,n}). 
\end{equation}

\subsubsection{A priori estimates for Nash equilibria and convergence to a limit point}  

From the previous subsection, we can fix an extended sequence of Nash equilibria $(X^n,Y^n,\xi^n,\zeta^n)_n$ of the $n$-Lipschitz games, which solves the FBSDE \eqref{eq FBSDE n} and satisfy the conditions \eqref{eq necessary cond n}.

We begin with the following a priori estimates on the moments of the these Nash equilibria. 
\begin{lemma}\label{lemma a priori estimates} 
We have 
$$
\sup_n \mathbb E \bigg[ \int_0^T |X_t^{n}|^2 dt + | X_T^{n}|^2 \bigg] < \infty
\quad \text{and} \quad\sup_n  \mathbb E \bigg[ |\xi^n_T|  + |\zeta^n_T| + \sup_{t\in[0,T]} |X_t^n| \bigg] < \infty.
$$
\end{lemma}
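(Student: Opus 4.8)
The plan is to test the Nash property of $\eta^n$ against the cheapest admissible deviation, namely doing nothing. Since $(0,0)\in\A_n^2$, Proposition \ref{prop NE Lip} yields, for every $i$, $J^i(\eta^{i,n},\eta^{-i,n})\le J^i(0,\eta^{-i,n})$. Writing $X^{0,i,n}$ for the uncontrolled state of player $i$ (the solution of \eqref{eq SDE n} with $\xi^i=\zeta^i\equiv 0$, hence carrying the mollified volatility $\sigma^{i,n}$) and decomposing $X^{i,n}=X^{0,i,n}+\Delta^{i,n}$ with $\Delta^{i,n}_t=\int_0^t\Gamma^i_{s,t}\,d(\xi^{i,n}-\zeta^{i,n})_s$, I would first note that, because the dynamics are decoupled across players and $Q^i$ is symmetric (condition \ref{condition Q i symmetric} of Assumption \ref{assumption}), the deviation of player $i$ to zero affects only the terms of $X^n Q^i X^n$ that contain the $i$-th coordinate. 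Thus, for each $i$, the Nash inequality collapses to a bound involving only $\Delta^{i,n}$, the cross terms $\sum_{j\ne i}q^{\,i,j;i}X^{j,n}\Delta^{i,n}$, and the (strictly positive) running cost of $\xi^{i,n},\zeta^{i,n}$.

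The crucial step is to sum these $N$ inequalities. Using $(X^{i,n})^2-(X^{0,i,n})^2=(\Delta^{i,n})^2+2X^{0,i,n}\Delta^{i,n}$ together with $X^{j,n}=X^{0,j,n}+\Delta^{j,n}$, the purely quadratic-in-$\Delta$ contribution reassembles, precisely by the definition \eqref{eq def matrix bar Q} of $\bar Q$, into $\Delta^n_t\,\bar Q_t\,\Delta^n_t$ at running times and $\Delta^n_T\,\bar Q_T\,\Delta^n_T$ at the terminal time, where $\Delta^n=(\Delta^{1,n},\dots,\Delta^{N,n})$. By the positive definiteness in condition \ref{condition Q bar Q >0} of Assumption \ref{assumption}, these are bounded below by $\kappa|\Delta^n_t|^2$ and $\kappa|\Delta^n_T|^2$. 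The remaining terms are linear in $\Delta^n$ with coefficients given by the uncontrolled states $X^{0,\cdot,n}$, while the singular costs contribute $\sum_i\int_{[0,T]}(c^{i,+}\,d\xi^{i,n}+c^{i,-}\,d\zeta^{i,n})\ge c_0\sum_i(\xi^{i,n}_T+\zeta^{i,n}_T)$, where $c_0:=\min_{i}\min_{t}\min\{c^{i,+}_t,c^{i,-}_t\}>0$ by continuity and condition \ref{condition c}.

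Absorbing the linear terms by Young's inequality into half of the coercive term, I expect to arrive at
\[
\frac{\kappa}{2}\,\E\Big[\int_0^T|\Delta^n_t|^2\,dt+|\Delta^n_T|^2\Big]
+ c_0\sum_{i=1}^N\E\big[\xi^{i,n}_T+\zeta^{i,n}_T\big]
\le C\,\E\Big[\int_0^T|X^{0,n}_t|^2\,dt+|X^{0,n}_T|^2\Big].
\]
The right-hand side is bounded uniformly in $n$: each $X^{0,i,n}$ solves a linear SDE with bounded coefficients and diffusion $\sigma^{i,n}\le\sigma^i\vee 1$, bounded uniformly in $n$, so standard moment estimates give $\sup_n\E[\sup_{t\le T}|X^{0,n}_t|^2]<\infty$. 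This yields at once $\sup_n\E[\int_0^T|\Delta^n_t|^2\,dt+|\Delta^n_T|^2]<\infty$ and $\sup_n\sum_i\E[\xi^{i,n}_T+\zeta^{i,n}_T]<\infty$. Both claims then follow from $X^{i,n}=X^{0,i,n}+\Delta^{i,n}$: the $L^2$ bound on the state is immediate, and for the last estimate one uses $\sup_{t\le T}|\Delta^{i,n}_t|\le\|\Gamma^i\|_\infty(\xi^{i,n}_T+\zeta^{i,n}_T)$ (as $b^i$ is bounded) together with $\sup_n\E[\sup_{t\le T}|X^{0,n}_t|]<\infty$.

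The main obstacle is the bookkeeping in the second step: one must verify that summing the player-by-player cost differences reassembles exactly the quadratic form $\bar Q$ of \eqref{eq def matrix bar Q}, which is precisely what makes condition \ref{condition Q bar Q >0} the correct coercivity hypothesis. It is worth emphasizing that the quadratic-in-$\Delta$ structure is indispensable: the singular cost is only linear, so $\int c\,d\xi$ controls the controls merely in $L^1$, and the $L^2$ bound on the state must therefore be extracted from the coercivity of $\bar Q$ rather than from the cost of intervention. Finally, the uniformity in $n$ hinges entirely on $\sigma^{i,n}\le\sigma^i\vee 1$, which keeps the uncontrolled moments on the right-hand side from blowing up as $n\to\infty$.
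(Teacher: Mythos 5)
Your proof is correct and follows essentially the same strategy as the paper: testing the Nash property of $\eta^n$ against the zero deviation, summing the resulting inequalities over $i$ so that the quadratic terms reassemble exactly into the coercive form $\bar Q$ of \eqref{eq def matrix bar Q}, and using the uniform positivity of $c^{i,\pm}$ to control $\xi^n_T+\zeta^n_T$. The only differences are cosmetic: the paper keeps the quadratic form in $X^n$ itself and uses H\"older (then a separate second comparison for the control bound and Gr\"onwall for the sup bound), whereas you decompose $X^n=X^{0,n}+\Delta^n$, absorb the linear terms by Young, extract both estimates in one pass, and get the sup bound from the explicit representation $\sup_t|\Delta^{i,n}_t|\le \|\Gamma^i\|_\infty(\xi^{i,n}_T+\zeta^{i,n}_T)$ --- all algebraically equivalent to the paper's argument.
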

 
\begin{proof} 
We divide the proof in two steps. 
\smallbreak \noindent
\emph{Step 1.}
For $i=1,...,N$, let $\tilde X ^{i,n}$ denote the solution to the SDE \eqref{eq SDE n} controlled by $0$; that is, to the SDE
\begin{equation}\label{eq SDE n uncontrolled}
 d \tilde X_t^{i,\eta}=( a^i_t + b^i_t \tilde X_t^{i,\eta} )ds + \sigma^{i,n}_t dW_{t}^i , \quad \tilde X _{0}^{i,\eta}=x^i_0.
\end{equation}  
Since $\eta^n$ is a Nash equilibrium for the $n$-Lipschitz game, we have $J^i(\eta^{i,n}, \eta^{-i,n}) \leq J^i (0,0, \eta^{-i,n})$, from which we obtain
\begin{align*}
    \mathbb E \bigg[ \int_0^T & X ^{n}_t Q_t^i X ^{n}_t dt + X ^{n}_T Q^i_T X^n_T \bigg] \\ \notag
    & \leq J^i(\eta^{i,n}, \eta^{-i,n})  \leq J^i(0,0, \eta^{-i,n}) \\ \notag
    & = \mathbb E \bigg[ \int_0^T \Big( q^{i,i;i}_t (\tilde X_t^{i,n})^2 + 2 \sum_{j \ne i} q^{i,j;i}_t \tilde X_t^{i,n} X_t^{j,n} + \sum_{k\ne i,j \ne i } q^{k,j;i}_t X_t^{k,n} X_t^{j,n} \Big) dt \bigg] \\ \notag
    &\quad  +  \mathbb E \bigg[   q^{i,i;i}_T (\tilde X_T^{i,n})^2 + 2 \sum_{j \ne i}  q^{i,j;i}_T \tilde X_T^{i,n} X_T^{j,n} + \sum_{k\ne i,j \ne i } q^{k,j;i}_T X_T^{k,n} X_T^{j,n} \bigg], \notag 
\end{align*}
which in turn rewrites as 
\begin{align*}
\mathbb E \bigg[ \int_0^T & \Big( q^{i,i;i}_t ( X_t^{i,n})^2 + 2 \sum_{j \ne i} q^{i,j;i}_t  X_t^{i,n} X_t^{j,n} \Big) dt    + q^{i,i;i}_T ( X_T^{i,n})^2 + 2 \sum_{j \ne i} q^{i,j;i}_T  X_T^{i,n} X_T^{j,n} \bigg] \\
& \leq
 \mathbb E \bigg[ \int_0^T \Big( q^{i,i;i}_t (\tilde X_t^{i,n})^2 + 2 \sum_{j \ne i} q^{i,j;i}_t \tilde X_t^{i,n} X_t^{j,n}\Big) dt \\
&\quad \quad \quad \quad \quad \quad \quad \quad +  q^{i,i;i}_T (\tilde X_T^{i,n})^2 + 2 \sum_{j \ne i} q^{i,j;i}_T \tilde X_T^{i,n} X_T^{j,n} \bigg].
\end{align*}
Therefore, summing over $i=1,...,N$, for $\bar Q$ as in \eqref{eq def matrix bar Q}, 
$$
\tilde Q _t^{k,j} := 
\begin{cases}
    0 & \text{if} \quad k = j, \\ 
     2 q_t^{k,j;k}   & \text{if} \quad { k\ne j}.
\end{cases} 
\quad \text{and} \quad
\tilde X ^n:=(\tilde X^{1,n}, ...,\tilde X ^{N,n}),
$$
using the integrability of $\tilde X ^n$, we find
\begin{align*}
\mathbb E \bigg[ \int_0^T  X_t^{n} \b Q _t X_t^{n}  dt   +   X_T^{n} \b Q _T X_T^{n} \bigg]  
\leq C \bigg( 1+
 \mathbb E \bigg[ \int_0^T  \tilde X _t^n \tilde Q _t X _t^{n} dt +  \tilde X_T ^n \tilde Q _T X_T^{n} \bigg] \bigg),    
\end{align*}
and, since $\b Q > 0$ (cf.\ Condition \ref{condition Q bar Q >0} in Assumption \ref{assumption}),  we deduce that
\begin{align*} 
\mathbb E \bigg[ \int_0^T |X_t^{n}|^2 dt + | X_T^{n}|^2 \bigg]  
   \leq C \bigg( 1+
 \mathbb E \bigg[ \int_0^T  \tilde X _t^n \tilde Q _t X_t^{n} dt +  \tilde X_T ^n \tilde Q _T X_T^{n} \bigg] \bigg).    
\end{align*}
By employing H\"older inequality with exponent $2$ on the latter estimate, we obtain
\begin{align*}
\mathbb E \bigg[ \int_0^T |X_t^{n}|^2 dt + | X_T^{n}|^2 \bigg]  
 & \leq C \bigg( 1+  \sum_{i,j=1}^N  \mathbb E \bigg[\int_0^T |\tilde X^{i,n}_t|| X ^{j,n}_t | dt +  |\tilde X^{i,n}_T|| X ^{j,n}_T |\bigg] \bigg) \\ 
  & \leq C \bigg( 1 + \sum_{i,j=1}^N \bigg( \mathbb E \bigg[\int_0^T |\tilde X^{i,n}_t|^{2}   dt + |\tilde X^{i,n}_T|^{2} \bigg] \bigg)^{\frac{1}{2}} \\
  & \quad \quad \quad \quad \quad \quad \times \bigg( \mathbb E \bigg[ \int_0^T | X ^{j,n}_t |^2 dt + | X ^{j,n}_T |^{2} \bigg] \bigg)^{\frac12} \bigg).
\end{align*}
Hence, again by the integrability of $\tilde X ^n$, we get 
\begin{align*}
\mathbb E \bigg[ \int_0^T |X_t^{n}|^2 dt + | X_T^{n}|^2 \bigg]  
 \leq C \bigg( 1+  \bigg( \mathbb E \bigg[ \int_0^T | X ^{n}_t |^{2} dt + | X ^{n}_T |^{2} \bigg] \bigg)^{\frac12} \bigg),
\end{align*}
which in turn implies that
\begin{equation}
    \label{eq a priori estimates int X}
    \sup_n \mathbb E \bigg[ \int_0^T |X_t^{n}|^2 dt + | X_T^{n}|^2 \bigg] < \infty,
\end{equation}
thus proving the first part of the statement. 

\smallbreak \noindent 
\emph{Step 2.}
We now estimate $\xi_T^n$ and $\zeta_T^n$.
Fix $i\in\{1,...,N\}$ and, for $\tilde X ^{i,n}$ denoting the solution to the SDE \eqref{eq SDE n uncontrolled}, 
set $\b X^n = (\b X^{1,n},...,\b X^{N,n})$ by $\b X ^{j,n} := X^{j,n}$ if $j\ne i$ and $\b X^{i,n} := \tilde X^{i,n}$.
By optimality of $\eta^n$ we have
\begin{align*}
\mathbb E \bigg[ \int_0^T    X_t^{n} Q ^i_t X_t^{n}  dt  & +   X_T^{n} Q ^i_T X_T^{n}  + \int_{[0,T]} (c^{i,+}_t d\xi^{i,n}_t + c^{i,-}_t d\zeta ^{i,n}_t)  \bigg] \\
& \leq J^i(\eta^{i,n}, \eta^{-i,n})  \leq J^i(0, \eta^{-i,n}) \\
& = \mathbb E \bigg[ \int_0^T   \b X_t^{n} Q ^i_t \b X_t^{n} dt  + \b  X_T^{n} Q ^i_T \b X_T^{n} \bigg]. 
\end{align*}
Now, using the fact that $c^{i,+}_t, c^{i,-}_t \geq \b c >0$ (see in Condition \ref{condition c} in Assumption \ref{assumption}), by employing H\"older inequality with exponent $2$ we find
\begin{align*}
    \b c \mathbb E \Big[ \xi^{i,n}_T + \zeta^{i,n}_T  \Big] 
    & \leq \mathbb E \bigg[   \int_{[0,T]} (c^{i,+}_t d\xi^{i,n,+}_t + c^{i,-}_t d\xi^{i,n,-}_t)  \bigg] \\
    & \leq \mathbb E \bigg[ \int_0^T \Big(   \b X_t^{n} Q ^i_t \b X_t^{n} -  X_t^{n} Q ^i_t X_t^{n}\Big) dt  + \b  X_T^{n} Q ^i_T \b X_T^{n} - X_T^{n} Q ^i_T X_T^{n} \bigg] \\
    & \leq C \bigg( 1+ \mathbb E \bigg[ \int_0^T |X_t^{n}|^2 dt + | X_T^{n}|^2 \bigg] \bigg), 
\end{align*}
so that, thanks to the estimates in \eqref{eq a priori estimates int X}, we obtain
$$
 \sup_n \mathbb E [ \xi^{i,n}_T + \zeta^{i,n}_T] < \infty. 
$$
Finally, since $i \in \{1,...,N\}$ is arbitrary, we obtain
$$
 \sup_n \mathbb E [ |\xi_T^n| + |\zeta^n_T|] \leq \sup_n \sum_{i=1}^N \mathbb E [ \xi^{i,n}_T +\zeta^{i,n}_T] < \infty,
$$
and, by classical Gr\"onwall estimate, we conclude that
$$
\sup_n \mathbb E \bigg[ \sup_{t\in [0,T]} |X_t^n| \bigg] < \infty, 
$$
completing the proof.
\end{proof}

We are now ready to identify the accumulation points of the sequence of Nash equilibria $(X^n,Y^n,\xi^n,\zeta^n)_n$ of the $n$-Lipschitz game.
To this end, for a generic $d \in \mathbb N$, introduce the Hilbert space $\mathbb H _T^{2,d}$ with norm $\| \cdot \|_T^{2,d} $ defined as
\begin{equation*}
    \mathbb H _T^{2,d}:= \{ M \in \mathbb H ^{2,d} \text{ s.t. } \| M \|_T^{2,d} < \infty \} 
    \quad \text{and} \quad
    \| M \|_T^{2,d} :  = \mathbb E \bigg[ \int_0^T |M_t|^2 dt + |M_T|^2 \bigg],
\end{equation*}
and set $\mathbb H ^{2}_T := \mathbb H ^{2,1}_T$. 
Also, on $ \mathbb H _T^{2,d}$ we can consider the weak convergence; that is, for $M,\, M^n \in  \mathbb H _T^{2,d}$, $n\in \mathbb N$, we say that 
\begin{equation*}
    M^n \rightarrow M \text{ as $n\to \infty$, weakly in $\mathbb H _T^{2,d}$,}
\end{equation*}
if, for any $H \in \mathbb H _T^{2,d}$, one has
\begin{align*}
   \lim_n \E \bigg[ \int_0^T H_t M_t^{n} dt +  H_T M_T^{n} \bigg]  = \E \bigg[ \int_0^T H_t M_t dt +  H_T M_T  \bigg].  
\end{align*}

We now state the following convergence result.
\begin{proposition}
    \label{prop weak convergence} 
There exists a subsequence of $(X^n, Y^n, \xi^n, \zeta^n)_n$ (still indexed by
$n$) and processes $(X, Y) = (X^1, ..., X^N, Y^1,...,Y^N) \in \mathbb H _T^{2,2N}$ and $( \xi, \zeta) = (\xi^1,...,\xi^N, \zeta^1,...,\zeta^N) \in \tilde{\mathcal A }^{2N}$ such that:
\begin{enumerate}
\item \label{eq weak convergence x Y} $(X^n, Y^n)_n \to (X, Y)$ as $n\to \infty$, weakly in $\mathbb H _T^{2,2N}$;
\item\label{eq conv xi zeta in distribution}  For $\bar \xi^{i,m} := \frac1n \sum_{n=1}^m \xi^{i,n}$ and $\bar \zeta^{i,m} := \frac1n \sum_{n=1}^m \zeta^{i,n}$, for
$\P$-a.a. $\omega \in  \Omega$, the convergence
\begin{align*}
\bar \xi_t^{i,m}(\omega)  \rightarrow \xi_t^{i} (\omega)  \text{ for any continuity point of $\xi^i(\omega)$ and } 
\bar \xi_T^{i,m}(\omega)  \rightarrow \xi_T^{i}(\omega), \\ \notag
\bar \zeta_t^{i,m}(\omega)  \rightarrow \zeta_t^{i} (\omega)  \text{ for any continuity point of $\zeta^i(\omega)$ and } 
\bar \zeta_T^{i,m}(\omega)  \rightarrow \zeta_T^{i}(\omega),  \notag 
\end{align*} 
as $m \to \infty$ holds, for any $i=1,...,N$;
\item The profile strategy $(\xi, \zeta)$ is admissible. 
\end{enumerate} 
\end{proposition}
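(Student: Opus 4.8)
The plan is to obtain $(X,Y,\xi,\zeta)$ through three limit arguments: weak sequential compactness in the Hilbert space $\mathbb H_T^{2,2N}$ for $(X^n,Y^n)$, a Koml\'os--Helly argument for the nondecreasing pair $(\xi^n,\zeta^n)$, and Fatou-type lower semicontinuity for admissibility. For assertion \eqref{eq weak convergence x Y}, I would first check that $(X^n,Y^n)_n$ is bounded in $\mathbb H_T^{2,2N}$. The bound $\sup_n\|X^n\|_T^{2,N}<\infty$ is exactly \eqref{eq a priori estimates int X} from Lemma \ref{lemma a priori estimates}. For $Y^n$, I would use the explicit representation in \eqref{eq FBSDE n}: since $b^i$ and the $q^{i,j;i}$ are bounded, $\Gamma^i$ in \eqref{eq Gamma} is bounded, so $|Y^{i,n}_t|\le C\,\E[\Theta^n\mid\cF_t]$ with $\Theta^n:=|X^n_T|+\int_0^T|X^n_s|\,ds$; by Cauchy--Schwarz and \eqref{eq a priori estimates int X} one has $\sup_n\E[|\Theta^n|^2]<\infty$, and Doob's $L^2$ inequality applied to the martingale $t\mapsto\E[\Theta^n\mid\cF_t]$ gives $\sup_n\|Y^n\|_T^{2,N}<\infty$. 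Weak sequential compactness of bounded sets in the Hilbert space $\mathbb H_T^{2,2N}$ then furnishes a subsequence along which $(X^n,Y^n)\to(X,Y)$ weakly.

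The technical core is the construction of the nondecreasing limits. Fix $i$. By Lemma \ref{lemma a priori estimates}, $\sup_n\E[\xi^{i,n}_T]<\infty$, so $0\le\xi^{i,n}_q\le\xi^{i,n}_T$ shows that $(\xi^{i,n}_q)_n$ is bounded in $L^1$ for every $q$. Setting $D:=(\Q\cap[0,T])\cup\{T\}$, I would apply Koml\'os' theorem successively and diagonalize over the countable set $D\times\{1,\dots,N\}$ (and analogously for $\zeta$): the key feature that the a.s.\ Ces\`aro limit persists along every further subsequence both makes the diagonalization legitimate and lets me carry out this extraction within the subsequence already fixed for the weak convergence. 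This yields a subsequence along which, for every $q\in D$ and every $i$, the Ces\`aro averages $\b\xi^{i,m}_q=\frac1m\sum_{n=1}^m\xi^{i,n}_q$ converge $\P$-a.s.\ to some limit $\xi^i_q$. Since each $\b\xi^{i,m}$ is nondecreasing in $t$, the limits inherit monotonicity on $D$ a.s., and $\xi^i_t:=\lim_{q\downarrow t,\,q\in D}\xi^i_q$ defines a c\`adl\`ag nondecreasing process. Assertion \eqref{eq conv xi zeta in distribution} then follows by sandwiching: for a continuity point $t$ of $\xi^i(\omega)$ and $q'<t<q''$ in $D$, monotonicity gives $\b\xi^{i,m}_{q'}\le\b\xi^{i,m}_t\le\b\xi^{i,m}_{q''}$, and sending $m\to\infty$ and then $q'\uparrow t$, $q''\downarrow t$ forces $\b\xi^{i,m}_t(\omega)\to\xi^i_t(\omega)$; convergence at $T$ is immediate since $T\in D$. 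The construction for $\zeta^i$ is identical.

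It remains to verify that $(\xi,\zeta)$ is admissible. Adaptedness holds because $\xi^i_q$ is $\cF_q$-measurable, whence $\xi^i_t=\lim_{q\downarrow t}\xi^i_q$ is $\cF_{t+}=\cF_t$-measurable by right-continuity of $\F$; being c\`adl\`ag and adapted, $\xi^i$ (and likewise $\zeta^i$) is progressively measurable, and $\E[\xi^i_T]<\infty$ follows from Fatou's lemma together with $\E[\b\xi^{i,m}_T]\le\sup_n\E[\xi^{i,n}_T]<\infty$. For the $L^2$ condition defining $\A^2$, I would invert the dynamics \eqref{eq SDE n}: writing $v^{i,n}=\xi^{i,n}-\zeta^{i,n}$, one has $v^{i,n}_t=X^{i,n}_t-x^i_0-\int_0^t(a^i_s+b^i_sX^{i,n}_s)\,ds-\int_0^t\sigma^{i,n}_s\,dW^i_s$, so boundedness of $a^i,b^i$, the uniform bound $\sigma^{i,n}\le\sigma^i\lor1$ from \eqref{eq ellipticity n}, and \eqref{eq a priori estimates int X} give $\sup_n\|v^{i,n}\|_T^{2,1}<\infty$. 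By convexity of $M\mapsto\|M\|_T^{2,1}$ the Ces\`aro averages $\b v^{i,m}=\b\xi^{i,m}-\b\zeta^{i,m}$ satisfy the same bound, and they converge a.s.\ to $v^i=\xi^i-\zeta^i$ at every $t$ outside the (countable, hence Lebesgue-null) set of discontinuities of $\xi^i$ or $\zeta^i$, as well as at $T$; Fatou's lemma then yields $\|v^i\|_T^{2,1}\le\liminf_m\|\b v^{i,m}\|_T^{2,1}<\infty$, i.e.\ $(\xi^i,\zeta^i)\in\A^2$.

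The main obstacle is the second step. A weak $\mathbb H_T^{2,1}$-limit of the $\xi^{i,n}$ need be neither monotone nor c\`adl\`ag, so one cannot simply pass to the weak limit as for $(X^n,Y^n)$; it is the combination of Koml\'os' theorem (a.s.\ convergence of Ces\`aro averages on a countable time grid) with the Helly-type monotone extension (producing a c\`adl\`ag nondecreasing limit and upgrading to convergence at all continuity points) that does the work. Some bookkeeping is also required to ensure that a single subsequence simultaneously serves the weak convergence of $(X^n,Y^n)$ and the a.s.\ convergence of all the averaged increasing processes.
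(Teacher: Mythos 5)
Your proposal is correct, and at the skeleton level it matches the paper: uniform bounds from Lemma \ref{lemma a priori estimates} plus the representation of $Y^{i,n}$ in \eqref{eq FBSDE n} give weak compactness of $(X^n,Y^n)$ in $\mathbb H_T^{2,2N}$; a Koml\'os-type a.s.\ Ces\`aro convergence handles the monotone controls; and the $L^2$ bound on $v^{i,n}=\xi^{i,n}-\zeta^{i,n}$, obtained by inverting the forward dynamics, yields admissibility. The two substantive differences are as follows. First, where the paper simply invokes Lemma 3.5 in \cite{K} (a Koml\'os--Helly compactness lemma for nondecreasing processes, including the persistence of the a.s.\ Ces\`aro limit along further subsequences, which both proofs need for the diagonalization bookkeeping), you prove that lemma from scratch: Koml\'os at each time in a countable dense set $D\ni T$, diagonal extraction, monotone/c\`adl\`ag extension by right limits along $D$, and the sandwich argument at continuity points; your verification of adaptedness via $\cF_{t+}=\cF_t$ is a detail the paper leaves implicit. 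Second, for admissibility the paper takes a weak limit $v^{i,n}\rightharpoonup v^i$ in $\mathbb H^2_T$ (see \eqref{eq weak convergence v}) and identifies $v^i=\xi^i-\zeta^i$ via Banach--Saks plus a.e.\ pointwise convergence, whereas you bypass the weak limit entirely: convexity of the norm bounds the Ces\`aro averages $\b v^{i,m}$, which converge a.s.\ at all but countably many $t$ and at $T$, so Fatou gives $\|v^i\|_T^{2,1}<\infty$ directly. Your route is slightly more economical for the proposition as stated, but note that the paper's byproduct \eqref{eq weak convergence v} is explicitly reused in the proof of Proposition \ref{prop weak sol FBSDE}; with your argument this is recovered in one extra line (the sequence $(v^{i,n})_n$ is bounded in $\mathbb H^2_T$, so a further subsequence converges weakly, and the limit is identified with $\xi^i-\zeta^i$ by Banach--Saks or Mazur exactly as you identify it pointwise), so nothing downstream is lost.
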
 
\begin{proof}
Since the process $Y^{i,n}$ solves the BSDE in \eqref{eq FBSDE n}, we have
$$
\E \big[ |Y^{i,n}_t|^2 \big] \leq C \E \bigg[ \int_0^T |X_t^{n}|^2 dt + | X_T^{n}|^2 \bigg],
$$
so that, by Lemma \ref{lemma a priori estimates}, we have
$$
\sup_n \E \bigg[ \int_0^T |Y_t^{n}|^2 dt + | Y_T^{n}|^2 \bigg] < \infty.
$$
The latter, together with the estimates in Lemma \ref{lemma a priori estimates}, allows to find a subsequence of $(X^n, Y^n)_n$ (still labelled by $n$) and a process $(X,Y) = (X^1, ..., X^N, Y^1,...,Y^N)$ such that
$(X^n, Y^n)$ converges to $ (X,Y)$ as $n\to \infty$, weakly in $\mathbb H _T^{2,2 N}$.

We next identify the limits for the sequence $(\xi^n,\zeta^n)_n$. 
By the estimates in Lemma \ref{lemma a priori estimates}, thanks to Lemma 3.5 in \cite{K} we can find processes $\xi = (\xi^1,...,\xi^N) \in \tilde \A ^N$ and $\zeta = (\zeta^1,...,\zeta^N) \in \tilde \A ^N$ and a subsequence of indexes (not relabelled) such that, for any further subsequence, by setting
\begin{equation*}
\bar \xi^{i,m} := \frac1n \sum_{n=1}^m \xi^{i,n}
\quad \text{and} \quad 
\bar \zeta^{i,m} := \frac1n \sum_{n=1}^m \zeta^{i,n},
\end{equation*}
we have, for $\P$-a.a. $\omega \in  \Omega$, the convergence
\begin{align}\label{eq conv xi zeta in distribution in proof}
\bar \xi_t^{i,m}(\omega)  \rightarrow \xi_t^{i} (\omega)  \text{ for any continuity point of $\xi^i(\omega)$ and } 
\bar \xi_T^{i,m}(\omega)  \rightarrow \xi_T^{i}(\omega), \\ \notag
\bar \zeta_t^{i,m}(\omega)  \rightarrow \zeta_t^{i} (\omega)  \text{ for any continuity point of $\zeta^i(\omega)$ and } 
\bar \zeta_T^{i,m}(\omega)  \rightarrow \zeta_T^{i}(\omega),  \notag 
\end{align} 
as $m \to \infty$ for any $i=1,...,N$.
On the other hand,  for any $i=1,...,N$, we can define the processes $v^{i,n}:= \xi^{i,n}-\zeta^{i,n}$ and, by
Lemma \ref{lemma a priori estimates}, we have
\begin{align*}
    \sup_n \E \bigg [ \int_0^T |v^{i,n}_t|^2 dt + |v^{i,n}_T|^2 \bigg] \leq  C \sup_n \E \bigg[ \int_0^T |X_t^{i,n}|^2 dt + | X_T^{i,n}|^2 \bigg] < \infty. 
\end{align*}
Thus, there exists a further subsequence (again, not relabelled) and a process $v^i \in \mathbb H ^{2}_T $ such that
\begin{equation}\label{eq weak convergence v}
      v^{i,n} \rightarrow v^i \text{ as $n\to \infty$, weakly in $\mathbb H _T^{2}$.}
\end{equation}
Moreover, by Banach-Saks theorem, we can find another subsequence of $(v^{i,n})_n$ (still labelled by $n$) such that 
$\bar v^{i,m} := \frac1n \sum_{n=1}^m v^{i,n} \to v^i$, as $m \to \infty$, strongly in $\mathbb H ^{2}_T$. 
Thus, up to a subsequence (still labelled by $m$), we have the convergence
$$
\bar v _t^{i,m}  \to v^i_t, \text{ as $m\to \infty$, $\P \otimes dt$-a.e.\ in $\Omega \times [0,T]$.}
$$
The latter limit, together with \eqref{eq conv xi zeta in distribution in proof}, implies that $v^i = \xi^i - \zeta^i$.

Finally, since $v^i \in \mathbb H ^{2}_T$, we conclude that $(\xi^i,\zeta^i)$ is admissible, completing the proof of the proposition.
\end{proof}

\subsubsection{Properties of limit points}

In the next two proposition we will show that the accumulation point $(X,Y,\xi,\zeta)$ satisfies the conditions of Theorem \eqref{thm SMP}.

\begin{proposition}
    \label{prop weak sol FBSDE}
    The process $(X,Y,\xi,\zeta)$ solves the FBSDE 
\begin{equation}\label{eq FBSDE limit}
\begin{cases}   
d  X^{i}_t & = \big( a^i_t + b^i_t X ^{i}_t \big) dt  + \sigma ^i_t dW^i_t  + d  \xi^{i}_t - d \zeta^{i}_t \quad X_{0-}^{i}=x^i_0, \quad i=1,...,N,\\
Y_t^{i} &= 2 \E \Big[ \Gamma^i_{t,T} Q^{i;i}_T X_T + \int_t^T \Gamma^i_{t,s}  Q^{i;i}_s X_s ds    \Big| \mathcal F _t \Big], \quad i=1,...,N.
\end{cases}
\end{equation}
\end{proposition}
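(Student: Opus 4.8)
The plan is to pass to the limit in the approximating FBSDE \eqref{eq FBSDE n}, exploiting the convergences established in Proposition \ref{prop weak convergence}. The backward (adjoint) identity and the forward equation require rather different treatments: the former is handled by a soft functional-analytic argument, while the latter is the main point and requires upgrading the weak convergence of $(X^n)_n$ to a Ces\`aro-strong one.

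For the backward equation, I would observe that the right-hand side of the second line of \eqref{eq FBSDE n} defines a \emph{bounded linear} operator between Hilbert spaces: writing $(\mathcal L^i M)_t := 2\,\E\big[\Gamma^i_{t,T}Q^{i;i}_T M_T + \int_t^T \Gamma^i_{t,s}Q^{i;i}_s M_s\,ds \mid \mathcal F_t\big]$ for $M \in \mathbb H^{2,N}_T$, the boundedness of $b^i$ and of the $q^{k,j;i}$ together with Jensen's and the conditional Cauchy--Schwarz inequalities yields $\|\mathcal L^i M\|_T^{2,1} \le C\,\|M\|_T^{2,N}$, exactly as in the first displayed estimate in the proof of Proposition \ref{prop weak convergence}. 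Since a bounded linear operator between Hilbert spaces is weak-to-weak continuous, the weak convergence $X^n \rightharpoonup X$ from Proposition \ref{prop weak convergence}\eqref{eq weak convergence x Y} gives $Y^{i,n} = \mathcal L^i X^n \rightharpoonup \mathcal L^i X$ weakly in $\mathbb H^2_T$. As $Y^{i,n} \rightharpoonup Y^i$ by the same proposition, uniqueness of weak limits forces $Y^i = \mathcal L^i X$, which is precisely the second line of \eqref{eq FBSDE limit}.

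For the forward equation, I would first rewrite the forward part of \eqref{eq FBSDE n} in integral form, $X^{i,n}_t = x^i_0 + \int_0^t (a^i_s + b^i_s X^{i,n}_s)\,ds + \int_0^t \sigma^{i,n}_s\,dW^i_s + v^{i,n}_t$ with $v^{i,n} = \xi^{i,n}-\zeta^{i,n}$. Since weak convergence interacts badly with the stochastic integral and the singular terms, I would apply the Banach--Saks theorem to the jointly weakly convergent sequence $(X^n,v^n)_n$ in $\mathbb H^{2,N}_T \times \mathbb H^{2,N}_T$ (recall $v^{i,n}\rightharpoonup v^i$ from \eqref{eq weak convergence v}) to extract, along a single common subsequence, Ces\`aro averages $\bar X^{i,m}$ and $\bar v^{i,m}$ converging \emph{strongly} in $\mathbb H^2_T$ to $X^i$ and $v^i$ (hence, along a further subsequence, $\P\otimes dt$-a.e.); this is consistent with the almost sure convergence of $\bar\xi^{i,m},\bar\zeta^{i,m}$ recorded in Proposition \ref{prop weak convergence}\eqref{eq conv xi zeta in distribution}. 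Averaging the integral equation over $n=1,\dots,m$ gives, by linearity, $\bar X^{i,m}_t = x^i_0 + \int_0^t(a^i_s + b^i_s \bar X^{i,m}_s)\,ds + \int_0^t \bar\sigma^{i,m}_s\,dW^i_s + \bar v^{i,m}_t$ with $\bar\sigma^{i,m} := \frac1m\sum_{n=1}^m \sigma^{i,n}$. I would then pass to the limit termwise for fixed $t$: the drift converges in $L^2(\Omega)$ by strong convergence of $\bar X^{i,m}$ and boundedness of $b^i$; since $\sigma^{i,n}_s = \sigma^i_s \vee \frac1n \to \sigma^i_s$ boundedly one has $\bar\sigma^{i,m}\to\sigma^i$ in $L^2([0,T])$, so the stochastic integral converges in $L^2(\Omega)$ by the It\^o isometry; and $\bar v^{i,m}_t \to v^i_t$ at every continuity point of $v^i$ and at $t=T$ by Proposition \ref{prop weak convergence}\eqref{eq conv xi zeta in distribution}. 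This identifies $X^i$, for a.e.\ $t$ and at $t=T$, with the c\`adl\`ag process $x^i_0 + \int_0^t(a^i_s + b^i_s X^i_s)\,ds + \int_0^t\sigma^i_s\,dW^i_s + \xi^i_t - \zeta^i_t$; choosing this version as representative of $X^i$ in $\mathbb H^2_T$ gives the forward equation of \eqref{eq FBSDE limit} with $X^i_{0-}=x^i_0$.

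The main obstacle is precisely this forward passage: weak convergence of $X^n$ alone does not license taking limits in the drift, in the stochastic integral with the $n$-dependent coefficient $\sigma^{i,n}$, or in the singular increments, the latter converging only in the averaged almost sure sense of Proposition \ref{prop weak convergence}\eqref{eq conv xi zeta in distribution}. The Banach--Saks reduction to strongly convergent Ces\`aro averages is what reconciles all these modes of convergence at once and makes the termwise limit rigorous; the only delicate point is to perform all the successive extractions along one common subsequence so that the averaged identity is preserved in the limit.
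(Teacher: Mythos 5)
Your proof is correct, and it departs from the paper's in mechanics rather than in substance for the backward equation, but takes a genuinely different route for the forward one. For the backward identity, the paper proves by hand exactly what you invoke abstractly: it tests $Y^{i,n}$ against an arbitrary $M \in \mathbb H^{2}_T$, removes the conditional expectation via Theorem 1.33 of \cite{jacod1979calcul}, and integrates by parts so that only pairings of $X^n$ against fixed processes remain, then passes to the weak limit and reverses the steps; this is precisely a verification that your operator $\mathcal L^i$ is weak-to-weak continuous, so your formulation is a tidier packaging of the same argument (the one point to check, which your estimate covers, is boundedness in the $\|\cdot\|^{2,d}_T$-norm including the terminal component, where $(\mathcal L^i M)_T = 2Q^{i;i}_T M_T$). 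The real divergence is in the forward equation: the paper never leaves the weak topology --- it writes $X^{i,n}_t = A^{i,n}_t + \int_0^t b^i_s X^{i,n}_s\,ds + v^{i,n}_t$, tests against $M \in \mathbb H^2$, and integrates by parts to shift the drift convolution onto the fixed test process, so that the strong $L^2$-convergence $A^{i,n} \to A^i$ together with the weak convergences of $X^{i,n}$ and $v^{i,n}$ from Proposition \ref{prop weak convergence} and \eqref{eq weak convergence v} suffices, with no further subsequence extraction. Your Banach--Saks upgrade to strongly convergent Ces\`aro means, followed by a termwise limit in the averaged integral equation, is equally rigorous; the common-subsequence issue you flag is indeed resolvable, since the Koml\'os-type Lemma 3.5 of \cite{K} invoked in the proof of Proposition \ref{prop weak convergence} yields the almost sure Ces\`aro convergence along \emph{every} further subsequence, so the joint Banach--Saks extraction does not destroy it. As for what each route buys: the paper's duality argument is extraction-free and shorter once the integration-by-parts identities are in place, while your Ces\`aro route gives identification pointwise in $t$ and is stylistically consistent with the paper's own use of Banach--Saks averages in the proofs of Propositions \ref{prop weak convergence} and \ref{prop sufficient conditions}; both arguments conclude, as you do, that the forward identity holds $\P\otimes dt$-a.e., after which one passes to the c\`adl\`ag representative.
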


\begin{proof}
Take $i \in \{1,...,N\}$.
We first prove that $X^i$ solves the forward equation. 
Since, for any $n$, the process $X^{i,n}$ solves the forward equation in \eqref{eq FBSDE n}, we have
$$
X_t^{i,n} = A_t^{i,n}  + \int_0^t  b_s^i X^{i,n}_s ds + v_t^{i,n},  \quad A_t^{i,n} := x_0^i+ \int_0^t a_s^i ds + \int_0^t \sigma_s^{i,n} dW_s^i.
$$
Then, for any $M \in \mathbb H ^2$, via an integration by parts we obtain
\begin{align*}
\E \bigg[ \int_0^T M_t X^{i,n}_t dt \bigg]
& = \E \bigg[ \int_0^T M_t \Big(A_t^{i,n} + \int_0^t  b_s^i X^{i,n}_s ds + v_t^{i,n} \Big) dt \bigg] \\
& = \E \bigg[ \int_0^T M_t (A_t^{i,n} + v_t^{i,n})dt \bigg] \\
& \quad +\E \bigg[  \int_0^T  \Big( \int_0^T M_s ds \Big)  b_t^i X^{i,n}_t dt -  \int_0^T  \Big( \int_0^t M_s ds \Big) b_t^i X^{i,n}_t dt \bigg].
\end{align*}
Notice that, from the definition of $\sigma^{i,n}$ in \eqref{eq ellipticity n}, we have 
$$
\lim_n \mathbb E \bigg[ \int_0^T |A^{i,n}_t - A^i_t|^2 dt \bigg] = 0, 
\quad\text{ where } \quad 
 A_t^{i} := x_0^i+ \int_0^t a_s^i ds + \int_0^t \sigma_s^{i} dW_s^i.
$$
Hence, the convergence established in Proposition \ref{prop weak convergence} (see also \eqref{eq weak convergence v}) allows to take limits as $n\to \infty$ in the latter equality, and integrating again by parts, we conclude that
$$
\E \bigg[ \int_0^T M_t X^{i}_t dt \bigg]   = \E \bigg[ \int_0^T M_t \Big(A_t^i + \int_0^t  b_s^i X^{i}_s ds + v_t^{i} \Big) dt \bigg], \quad \text{for any $M \in \mathbb H^2$}.
$$
Thus, the forward equation $X_t^{i} = A_t^i + \int_0^t  b_s^i X^{i}_s ds + v_t^{i}$ holds $\mathbb P \otimes dt $-a.e.
 
We now prove that $Y^i$ solves the backward equation.
Since, for any $n$, the process $Y^{i,n}$ solves the backward equation in \eqref{eq FBSDE n}, we have, for any $M \in \mathbb H _T^2$, the identity 
$$
\E \bigg[ \int_0^T M_t Y^{i,n}_t dt \bigg]   = 2 \E \bigg[ \int_0^T M_t \Big( \E \Big[ \Gamma^i_{t,T} Q^{i;i}_T X^n_t + \int_t^T \Gamma^i_{t,s}  Q^{i;i}_s X^n_s ds  \Big| \mathcal F _t \Big] \Big) dt \bigg], 
$$
and, by  using Theorem 1.33 in \cite{jacod1979calcul} and then an integration by parts,  we obtain 
\begin{align*}
\E \bigg[ \int_0^T M_t Y^{i,n}_t dt \bigg]   
&= 2 \E \bigg[ \int_0^T M_t \Big( \Gamma^i_{t,T} Q^{i;i}_T X^n_T + \int_t^T \Gamma^i_{t,s}  Q^{i;i}_s X^n_s ds   \Big) dt \bigg] \\
&= 2 \E \bigg[ X^n_T Q^{i;i}_T \int_0^T M_t  \Gamma^i_{t,T} dt  + \int_0^T \Big( \int_0^t M_s \Gamma^i_{s,t} ds \Big)  Q^{i;i}_t X^n_t  dt \bigg].
\end{align*}
Finally, thanks to the convergence established in Proposition \ref{prop weak convergence}, we can take limits as $n\to \infty$ in the latter equality and, using the same steps backward, we conclude that
$$
\E \bigg[ \int_0^T M_t Y^{i}_t dt \bigg]   = 2 \E \bigg[ \int_0^T M_t \Big( \E \Big[ \Gamma^i_{t,T} Q^{i;i}_T X_T + \int_t^T \Gamma^i_{t,s}  Q^{i;i}_s X_s ds  \Big| \mathcal F _t \Big] \Big) dt \bigg], 
$$
for any $M \in \mathbb H _T^2$, so that $Y^i$ solves the backward equation.
\end{proof}

\begin{proposition}
    \label{prop sufficient conditions}
For every $i=1,...,N$, the following conditions hold true: 
\begin{enumerate}
\item\label{claim inside waiting}$ Y ^{i}_t + c_t^{i,+} \geq 0$ and $-  Y ^{i}_t + c_t^{i,-} \geq 0$, for any $t \in [0,T], \ \mathbb P$-a.s.;
\item\label{claim activation of control} $\int_{[0,T]} ( Y ^{i}_t + c_t^{i,+}) d \xi ^i_t = 0$ and $\int_{[0,T]} (- Y ^{i}_t + c_t^{i,-}) d \zeta ^i_t =0$, $\mathbb P $-a.s.
\end{enumerate}  
\end{proposition}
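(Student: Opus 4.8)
The plan is to verify the two families of conditions in the order listed, proving \eqref{claim inside waiting} first and then using it to close \eqref{claim activation of control}. Throughout I exploit the bang-bang description \eqref{eq necessary cond n} of the approximating equilibria: at level $n$ the rate $u^{i,n}$ equals $n$ on $\{Y^{i,n}+c^{i,+}<0\}$ and vanishes on $\{Y^{i,n}+c^{i,+}>0\}$ (symmetrically for $w^{i,n}$), so that pointwise $u_t^{i,n}(Y_t^{i,n}+c_t^{i,+})\le 0$ and $w_t^{i,n}(-Y_t^{i,n}+c_t^{i,-})\le 0$.

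For \eqref{claim inside waiting}, I would first quantify how often the constraint is violated. Since $\xi_T^{i,n}=\int_0^T u_t^{i,n}\,dt\ge n\int_0^T \mathbf 1_{\{Y^{i,n}+c^{i,+}<0\}}\,dt$, taking expectations and using $\sup_n\E[\xi_T^{i,n}]<\infty$ from Lemma \ref{lemma a priori estimates} gives $(\P\otimes dt)(\{Y^{i,n}+c^{i,+}<0\})\le C/n$. Combining this with the uniform bound $\sup_n\E[\int_0^T|Y^{i,n}_t|^2\,dt]<\infty$ (from the proof of Proposition \ref{prop weak convergence}) via Cauchy--Schwarz yields $\E[\int_0^T (Y_t^{i,n}+c_t^{i,+})^-\,dt]\to 0$, since $(Y^{i,n}+c^{i,+})^-\le |Y^{i,n}|\,\mathbf 1_{\{Y^{i,n}+c^{i,+}<0\}}$. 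Because $Y^{i,n}\to Y^i$ weakly in $\mathbb H^2_T$, a Banach--Saks argument produces Cesàro means converging strongly, hence $\P\otimes dt$-a.e.\ along a subsequence; the convexity of the negative part and Fatou's lemma then force $\E[\int_0^T (Y^i_t+c^{i,+}_t)^-\,dt]=0$, i.e.\ $Y^i+c^{i,+}\ge 0$ for $\P\otimes dt$-a.e.\ $(t,\omega)$, which the continuous version of $Y^i$ upgrades to every $t\in[0,T]$, $\P$-a.s. The bound $-Y^i+c^{i,-}\ge 0$ is identical.

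For \eqref{claim activation of control}, the point is that, thanks to \eqref{claim inside waiting} and the monotonicity of $\xi^i,\zeta^i$, each integral in the statement is already nonnegative; hence it suffices to show that their total sum over $i$ and over both controls has nonpositive expectation, for then every summand must vanish. Summing the pointwise inequalities above over $i$, the level-$n$ equilibria satisfy $\sum_i \E\big[\int_{[0,T]} Y^{i,n}\,dv^{i,n}+\int_{[0,T]}(c^{i,+}\,d\xi^{i,n}+c^{i,-}\,d\zeta^{i,n})\big]\le 0$, with $v^{i,n}=\xi^{i,n}-\zeta^{i,n}$. The main obstacle is the limit in the bilinear term $\int Y^{i,n}\,dv^{i,n}$, which pairs the only-weakly-convergent $Y^{i,n}$ against the only-Cesàro-convergent $v^{i,n}$. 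I resolve this through the identity \eqref{eq key idenity}, available because $(X,Y)$ solves \eqref{eq FBSDE limit} by Proposition \ref{prop weak sol FBSDE}: applied with the null deviation it rewrites $\sum_i\E[\int Y^i\,dv^i]$ as $\E[\int_0^T X_t\hat Q_t X_t\,dt+X_T\hat Q_T X_T]$ minus the cross term $\sum_i\E[\int_0^T 2Q^{i;i}_t X_t\tilde X^i_t\,dt+2Q^{i;i}_T X_T\tilde X^i_T]$, where $\tilde X^i$ is driven by the null control; the same rewriting holds at level $n$.

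With this reformulation the passage to the limit splits into three manageable pieces. The quadratic functional $\E[\int X^n\hat Q X^n\,dt+X^n_T\hat Q_T X^n_T]$ is convex and strongly continuous on $\mathbb H^{2,N}_T$, hence weakly lower semicontinuous, so its $\liminf$ dominates the limit quantity; here the positive definiteness of $\hat Q$ in Assumption \ref{assumption}\eqref{condition Q bar Q >0} is exactly what secures convexity. The cross term is a weak--strong pairing, since $X^n\to X$ weakly while the uncontrolled states $\tilde X^{i,n}\to \tilde X^i$ strongly (as $\sigma^{i,n}-\sigma^i\to 0$ boundedly), so it converges. Finally, writing the averaged cost as $\sum_i\E[\int c^{i,+}\,d\bar\xi^{i,m}+c^{i,-}\,d\bar\zeta^{i,m}]$ and using the almost sure weak-$*$ convergence $d\bar\xi^{i,m}\rightharpoonup d\xi^i$, $d\bar\zeta^{i,m}\rightharpoonup d\zeta^i$ from Proposition \ref{prop weak convergence}\eqref{eq conv xi zeta in distribution} against the continuous costs, with Fatou's lemma, bounds the cost from below by $\sum_i\E[\int c^{i,+}\,d\xi^i+c^{i,-}\,d\zeta^i]$. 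Taking the Cesàro average of the level-$n$ inequalities and passing to the $\liminf$ (using that the $\liminf$ of a Cesàro mean dominates the $\liminf$ of the sequence for the lower-semicontinuous pieces, and plain convergence for the cross term) yields $\sum_i\E[\int(Y^i+c^{i,+})\,d\xi^i+\int(-Y^i+c^{i,-})\,d\zeta^i]\le 0$; combined with the nonnegativity from \eqref{claim inside waiting}, every summand vanishes, which is \eqref{claim activation of control}. I expect the delicate bookkeeping to be the weak-$*$ convergence of the control measures up to the endpoints and the $\liminf$ through the Cesàro averages, but the genuine difficulty is the weak$\times$weak product $\int Y^{i,n}\,dv^{i,n}$, which only the convexity encoded in $\hat Q$ renders lower semicontinuous.
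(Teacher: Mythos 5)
Your proposal is correct and follows essentially the same route as the paper: the bang-bang conditions \eqref{eq necessary cond n} force $\E\big[\int_0^T (Y^{i,n}_t+c^{i,+}_t)^- dt\big]\to 0$, and the key identity \eqref{eq key idenity} converts the weak-times-weak pairing $\E\big[\int_{[0,T]} Y^{i,n}_t \, dv^{i,n}_t\big]$ into a weakly lower semicontinuous quadratic form in $\hat Q$ plus a weak--strong cross term, closed by the Ces\`aro/Fatou treatment of the cost integrals exactly as in \eqref{eq lim cesaro c d xi}--\eqref{eq liminf second}. Your only departures are harmless local variants: you derive the analogue of \eqref{eq limit FOCS 1} from $\sup_n \E[\xi^{i,n}_T]<\infty$ and Cauchy--Schwarz rather than from the optimality gap $J^i(0,0;\eta^{-i,n})-J^i(\eta^{i,n},\eta^{-i,n})$, and you pass the negative part to the limit via Banach--Saks, convexity and Fatou where the paper instead tests the weak convergence of $Y^{i,n}$ against the indicator $\mathds 1_{\{Y^{i}_t+c^{i,+}_t\leq 0\}}$.
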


\begin{proof}
We prove each claim separately. 
\smallbreak\noindent
\emph{Proof of \ref{claim inside waiting}.}
By Lemma \ref{lemma subgradient identities}, we have 
\begin{align}
 J^i( 0,0 ; \eta ^{-i,n}) - J^i(\xi^{i,n}&, \zeta^{i,n}; \eta ^{-i,n} )   \\ \notag 
& \geq - \mathbb E \bigg[ \int_0^T (Y_t^{i,n} + c_t^{i,+}) d\xi^{i,n}_t +\int_0^T (- Y_t^{i,n} + c_t^{i,-}) d\zeta^{i,n}_t
\bigg],
\end{align}
where, in the last equality, we have used the integrability of $\xi^{i,n}$ and $\zeta^{i,n}$.
Next, for $y \in \mathbb R$, set $y^+ := \max\{y,0\}$ and $y^- := \max\{ - y, 0 \}$.  
By using the necessary conditions in \eqref{eq necessary cond n}, we obtain
\begin{align*}
  n \mathbb E \bigg[ \int_0^T (Y_t^{i,n} + c_t^{i,+})^- dt &+  \int_0^T (- Y_t^{i,n} + c_t^{i,-})^- dt \bigg] \\
  & \leq   J^i( 0 ; \eta ^{-i,n}) - J^i(\xi^{i,n}, \zeta^{i,n}; \eta ^{-i,n} ), 
\end{align*}
so that, thanks to the boundedness of $Q^i$, we have
\begin{align*}
    n \mathbb E \bigg[ \int_0^T (Y_t^{i,n} + c_t^{i,+})^- dt &+ \int_0^T (- Y_t^{i,n} + c_t^{i,-})^- dt \bigg] \\ 
    &\leq  C\bigg( 1+ \mathbb E \bigg[ \int_0^T |X_t^{n}|^2 dt + | X_T^{n}|^2 \bigg] \bigg).
\end{align*}
Hence, by Lemma \ref{lemma a priori estimates} we deduce that
\begin{equation}
    \label{eq limit FOCS 1}
    \lim_n \mathbb E \bigg[ \int_0^T (Y_t^{i,n} + c_t^{i,+})^- dt \bigg] =  \lim_n \mathbb E \bigg[ \int_0^T (- Y_t^{i,n} + c_t^{i,-})^- dt \bigg] = 0. 
\end{equation}
From the latter equality, using that $Y^{i,n}$ converges weakly to $Y^i$ as $n\to \infty$ (cf.\ Proposition \ref{prop weak convergence}),  we deduce that
 \begin{align*}
     0 \leq \E \bigg[ \int_0^T (Y_t^{i} + c_t^{i,+})^- dt \bigg]  
     & = - \lim_n \E \bigg[ \int_0^T (Y_t^{i,n} + c_t^{i,+}) \mathds 1 _{\{ Y_t^{i} + c_t^{i,+} \leq 0\} } dt \bigg] \\
& \leq \lim_n \E \bigg[ \int_0^T (Y_t^{i,n} + c_t^{i,+})^- \mathds 1 _{\{ Y_t^{i} + c_t^{i,+} \leq 0\} } dt \bigg] \\
       & \leq  \lim_n \E \bigg[ \int_0^T (Y_t^{i,n} + c_t^{i,+})^-  dt \bigg] = 0.
 \end{align*}
Similarly, we find
 $$
 \E \bigg[ \int_0^T ( - Y_t^{i} + c_t^{i,-})^- dt \bigg] = 0,
 $$
completing the proof of Claim \ref{claim inside waiting}.

\smallbreak\noindent
\emph{Proof of \ref{claim activation of control}}.
Using \eqref{eq key idenity} (see the proof of Lemma \ref{lemma subgradient identities}) 
 with $\eta = \eta^{-i,n}$ and $(\b \xi, \b \zeta) = (0,0)$, denoting by $\tilde X ^{i,n}$ the solution to \eqref{eq SDE n uncontrolled}, we obtain
\begin{align*}  
\E \bigg[ \int_{[0,T]} Y^{i,n}_t    d (\xi^{i,n} - \zeta^{i,n}) _t \bigg] 
&= 2 \E \bigg[ \int_0^T \Big( \sum _{j=1}^N q_t^{i,j;i} X_t^{j,n}\Big)(X^{i,n}_t -\tilde X ^{i,n}_t) dt  \bigg] \\
& \quad + 2 \E \bigg[ \Big( \sum _{j=1}^N q_T^{i,j;i} X_T^{j,n}\Big)(X^{i,n}_T -\tilde X ^{i,n}_T)\bigg].\notag 
\end{align*}
Thus, summing over $i=1,...,N$, for $\hat Q$ as in \eqref{eq def matrix bar Q}  and $\tilde X ^n :=(\tilde X^{1,n}, ...,\tilde X ^{N,n})$ we find
\begin{align}\label{eq FOCs 2: repr n}
\sum_{i=1}^N \E \bigg[ \int_{[0,T]} Y^{i,n}_t    d (\xi^{i,n} - \zeta^{i,n}) _t \bigg] 
& = \E \bigg[ \int_0^T  ( X^n_t \hat Q _t X^n_t -\tilde  X^n_t  \hat Q _t  X _t^n) dt \bigg] \\ \notag
& \quad+  \E \big[ X^n_T \hat Q _T X^n_T -\tilde X^n_T  \hat Q _T  X _T^n \big]. \notag 
\end{align}
Similarly, for $\tilde X = (\tilde X ^1, ..., \tilde X ^N )$, with $\tilde X ^j$ solution to the SDE \eqref{eq SDE uncontrolled} for $j=1,...,N$, we obtain the identity 
\begin{align}\label{eq FOCs 2: repr} 
\sum_{i=1}^N \E \bigg[ \int_{[0,T]} Y^{i}_t    d (\xi^{i} - \zeta^{i}) _t \bigg] 
& = \E \bigg[ \int_0^T  ( X_t \hat Q _t X_t -  \tilde X_t  \hat Q _t X _t) dt \bigg] \\ \notag
& \quad+  \E \big[ X_T \hat Q _T X_T - \tilde X_T  \hat Q _T  X _T  \big].\notag 
\end{align}
 
From \eqref{eq ellipticity n}, we notice that
$$
\lim_n \mathbb E \bigg[ \int_0^T |\tilde X^n_t -\tilde X _t|^2 dt \bigg] = 0.
$$
Therefore, since $X^n \to X$ weakly (see Proposition \ref{prop weak convergence}),  we have
$$
\E \bigg[ \int_0^T \tilde X_t \hat Q _t  X _t dt + \tilde X_T \hat Q _T X _T \bigg] 
= \lim_n \E \bigg[ \int_0^T \tilde X_t^n \hat Q _t  X_t^n dt +\tilde X_T^n \hat Q _T  X _T^n \bigg]
$$
and, by convexity of the map $x \mapsto x \hat Q x$ (cf.\ Condition \ref{condition Q bar Q >0} in Assumption \ref{assumption}), we find
$$
\E \bigg[ \int_0^T X_t \hat Q _t X_t dt + X_T \hat Q _T  X _T \bigg] \leq \liminf_n \E \bigg[ \int_0^T X_t^n \hat Q _t X_t^n dt + X_T^n \hat Q _T  X _T^n\bigg].
$$
Hence, by using the latter limits in \eqref{eq FOCs 2: repr n} and \eqref{eq FOCs 2: repr}, we obtain
\begin{align}\label{eq liminf first}
\sum_{i=1}^N \E \bigg[ \int_{[0,T]} Y^{i}_t  d  ( \xi^{i} - \zeta^{i} )_t \bigg] \leq \liminf_n\sum_{i=1}^N \E \bigg[ \int_{[0,T]} Y^{i,n}_t d( \xi^{i,n} - \zeta^{i,n}) _t \bigg],
\end{align}

Next, for a suitable subsequence of indexes $(n_k)_k$, since the functions $c^{i,+}$ and $c^{i,-}$ are bounded and continuous, the limits at Point \ref{eq conv xi zeta in distribution} in Lemma \ref{prop weak convergence} give
\begin{equation}\label{eq lim cesaro c d xi}
\sum_{i=1}^N \E \bigg[ \int_{[0,T]} \Big( c_t^{i,+} d \xi^{i}_t  + c_t^{i,-} d \zeta^{i}_t \Big) \bigg] 
= \lim_m \frac1m \sum_{k=1}^m \sum_{i=1}^N \E \bigg[ \int_{[0,T]} \Big( c_t^{i,+} d \xi^{i, n_k}_t  + c_t^{i,-} d \zeta^{i,n_k}_t \Big) \bigg].
\end{equation}
Moreover, by the limits in \eqref{eq liminf first}, we also have
\begin{align}\label{eq liminf second}
\sum_{i=1}^N \E \bigg[ \int_{[0,T]} Y^{i}_t  d  ( \xi^{i} - \zeta^{i} )_t \bigg] 
\leq \liminf_m \frac1m \sum_{k=1}^m \sum_{i=1}^N \E \bigg[ \int_{[0,T]} Y^{i,n_k}_t d( \xi^{i,n_k} - \zeta^{i,n_k}) _t \bigg].
\end{align}

Finally, 
by the step 1 in this proof, the integrals $\int_{[0,T]} (Y^{i}_t + c_t^{i,+})  d \xi^{i}_t  $ and $ \int_{[0,T]} ( -Y^{i}_t + c_t^{i,-}) d \zeta^{i}_t$ are well defined (possibly equal to $+\infty$)
and, from \eqref{eq lim cesaro c d xi} and \eqref{eq liminf second}, we conclude that 
\begin{align*}
\sum_{i=1}^N & \E \bigg[ \int_{[0,T]} (Y^{i}_t + c_t^{i,+}) d \xi^{i}_t  + \int_{[0,T]} ( -Y^{i}_t + c_t^{i,-}) d \zeta^{i}_t \bigg] \\
& \leq  \liminf_m \frac1m \sum_{k=1}^m \sum_{i=1}^N \E \bigg[ \int_{[0,T]} (Y^{i,n_k}_t + c_t^{i,+}) d \xi^{i,n_k}_t  + \int_{[0,T]} ( -Y^{i,n_k}_t + c_t^{i,-}) d \zeta^{i,n_k}_t \bigg],\\
& =  \liminf_m \frac1m \sum_{k=1}^m \sum_{i=1}^N \bigg(- n_k \E \bigg[ \int_0^T \Big( (Y^{i,n_k}_t + c_t^{i,+})^-   +  ( -Y^{i,n_k}_t + c_t^{i,-})^- \Big) dt \bigg] \bigg) \leq 0,
\end{align*}
where we have used the necessary conditions in \eqref{eq necessary cond n}.
The latter inequality, combined with Claim \ref{claim inside waiting}, in turn implies that
$$
\int_{[0,T]} (Y^{i}_t + c_t^{i,+}) d \xi^{i}_t  = \int_{[0,T]} ( -Y^{i}_t + c_t^{i,-}) d \zeta^{i}_t= 0,
$$
thus completing the proof of the proposition.
\end{proof}

In order to conclude the proof of Theorem \ref{thm main}, we only remain to observe that, by Propositions \ref{prop weak sol FBSDE} and \ref{prop sufficient conditions}, the constructed $\eta$  satisfies the conditions of Theorem \ref{thm SMP}, and it is therefore a Nash equilibrium.

\section{An application to oligopoly investment games}
    \label{subsection oligopoly investment games}
We consider $N$ firms competing in a market by producing and selling a certain good.
The stochastic demand of such a good is modeled by the one-dimensional diffusion process
$$
d X^0_t = \mu^0(t,X^0_t)   dt + \sigma^0(t,X^0_t) dW^0_t, \quad X_0^0 =x^0_0 \in \R,
$$
which is driven by a one-dimensional Brownian motion $W^0$ on some complete probability space $(\Omega, \mathcal F, \mathbb P)$.
The functions $\mu^0, \sigma^0 :[0,T] \times \R \to \R$ are assumed to be Lipschitz continuous and the diffusive term $\sigma^0$ is assumed to satisfy the nondegeneracy condition
$$
0 < \underline \sigma \leq \sigma^0(t,x) \leq \bar \sigma, \quad \text{for any $t \in [0,T]$, $x\in \R$, for some $\underline \sigma, \bar \sigma \in \R$.}
$$

Following the fluctuations of the demand $X^0$, each company $i$ can  expand its capital stock $X^i$ through an irreversible investment strategy $\xi^i$. 
Since $\sigma^0$ is nondegenerate, the filtration generated by $X^0$ coincides with the filtration generated by $W^0$.
Let $\mathbb F ^0 = ( \cF _t^0)_t$ the right-continuous extension of the filtration generated by $W^0$, augmented by the $\P$-null sets.
Thus, strategies $\xi^i$ are $\mathbb F ^0$-adapted, nonnegative, nondecreasing, c\`adl\`ag processes with $\E [\xi_T^i ]< \infty$, and  the capital stock of firm $i$ evolves as 
$$
dX_t^i  =  - \delta^i X_t^i dt  + d \xi_t^i, \quad X_{0-}^i = x^i_0 \geq 0,
$$ 
where the parameter $\delta^i>0$ measures the natural deterioration of the capital.
The production output of firm $i$ is given by the multiple $\alpha^i X_t^i$, for some $\alpha^i >0$.

Assuming a linear demand (as in  \cite{Back&Paulsen}, at the end of Section 1), the price at time $t$ of the good is given by
$
 X^0_t - \gamma \sum_{j=1}^N \alpha^j X_t^j,
$
for a parameter $\gamma >0$.
Hence, the profits from sales of company $i$ is given by
$
X_t^{i} \big( X_t^0 - \gamma \sum_{j=1}^N \alpha^j X_t^{j} \big).
$
Moreover, we assume that the cost faced by company $i$ per unit of investment is $c^i>0$, and that the company's discount factor is $\rho > 0$.
Summarizing, each company aims at maximizing the net quadratic-singular profit functional
\begin{equation*}
P^i(\xi^i,\xi^{-i}):= \mathbb{E} \bigg[ \int_0^T e^{-\rho t} \alpha^i X_t^{i} \Big( X_t^0 - \gamma \sum_{j=1}^N  \alpha^j X_t^{j} \Big) dt - c^i \int_{[0,T]} e^{-\rho t} d\xi^{i}_{t}  \bigg].
\end{equation*}

Slightly adapting Theorem \ref{thm main}, we can show existence of equilibrium investment strategies.
\begin{theorem}
There exists an $\mathbb F ^0$-adapted Nash equilibrium.  
\end{theorem}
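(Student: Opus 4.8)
The plan is to cast the game as a minimization problem and to recognize it as a (slightly degenerate, non-symmetric) LQS game to which the machinery of Theorem \ref{thm main} applies after a few modifications. First I would pass to $J^i:=-P^i$ and augment the state by appending the exogenous demand, writing $Z=(X^0,X^1,\dots,X^N)$. Standard moment estimates for the SDE of $X^0$ (Lipschitz coefficients, bounded nondegenerate $\sigma^0$) give $X^0\in\mathbb{H}^2$. In the variables $(X^1,\dots,X^N)$ the data of player $i$ are $a^i\equiv 0$, $b^i=-\delta^i$, $\sigma^i\equiv 0$, one-sided control ($\zeta^i\equiv 0$, $c^{i,+}_t=e^{-\rho t}c^i$), no terminal cost, and running cost $e^{-\rho t}\big(\gamma\alpha^i X^i_t\sum_j\alpha^j X^j_t-\alpha^i X^i_t X^0_t\big)$. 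Thus the cost matrix $Q^i_t$ has $k$-th row $\gamma e^{-\rho t}\alpha^i(\alpha^1,\dots,\alpha^N)$ only in row $i$ and zero otherwise, and there is an additional exogenous linear-in-state term with coefficient $-e^{-\rho t}\alpha^i X^0_t$. Keeping $X^0$ as an (uncontrolled) coordinate makes the coefficients of the approximating games Markovian, which is what lets us invoke \cite{hamadene.mannucci2019}.

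The two features that violate Assumption \ref{assumption} are the non-symmetry of $Q^i$ and the failure of global positive definiteness of $\b Q$; both are repaired by the sign structure of the model. Since the investment is irreversible, the capital has no diffusion and $x^i_0\geq 0$, the states stay in the nonnegative orthant. Writing $u^k:=\alpha^k x^k$, one computes $x\b Q_t x=\gamma e^{-\rho t}\big(2(\sum_k u^k)^2-\sum_k (u^k)^2\big)$, which is indefinite in general but, for $x$ with nonnegative components, satisfies $x\b Q_t x\geq \gamma e^{-\rho t}\sum_k(\alpha^k x^k)^2\geq\kappa|x|^2$ with $\kappa=\gamma e^{-\rho T}\min_k(\alpha^k)^2>0$. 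This orthant coercivity is exactly what the a priori estimates of Lemma \ref{lemma a priori estimates} use. Symmetry was only invoked to write the adjoint driver as $2Q^{i;i}X$; dropping it, I would use the true gradient $\partial_{X^i}$ of the running cost (which now carries the exogenous term $-e^{-\rho t}\alpha^i X^0_t$) as the driver of $Y^{i}$, and the convexity needed in Lemma \ref{lemma subgradient identities} persists because the perturbation only moves $X^i$ and $\partial^2_{X^i}(\text{cost})=2\gamma e^{-\rho t}(\alpha^i)^2\geq 0$. Hence Lemma \ref{lemma subgradient identities} and Theorem \ref{thm SMP} carry over verbatim.

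The rest of the argument mirrors Subsection \ref{subsection proof of thm main}. For each $n$ I would regularize the degenerate capital dynamics by replacing $\sigma^i\equiv 0$ with $1/n$ against auxiliary independent Brownian motions (as in \eqref{eq ellipticity n}), so that the augmented $(N+1)$-dimensional system is uniformly elliptic and Theorem 4.1 of \cite{hamadene.mannucci2019} provides a bang-bang equilibrium $\eta^n$ of the $n$-Lipschitz game. For the uniform bounds I would split $X^{i,n}=\check X^{i,n}+R^{i,n}$, with $\check X^{i,n}\geq 0$ the control-driven part and $\|R^{i,n}\|_{T}^{2,1}=O(1/n)$ the vanishing martingale part; applying the orthant coercivity to $\check X^{n}$ and controlling the exogenous linear term by Young's inequality together with $X^0\in\mathbb{H}^2$ reproduces the estimates of Lemma \ref{lemma a priori estimates} uniformly in $n$. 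Extracting weak and Cesàro limits as in Proposition \ref{prop weak convergence}, I would then pass to the limit in the FBSDE \eqref{eq FBSDE limit} (with the extra $X^0$ term) and in the maximum-principle conditions, exactly as in Propositions \ref{prop weak sol FBSDE} and \ref{prop sufficient conditions}; note that $\hat Q_t=2\gamma e^{-\rho t}\alpha\alpha^{\top}$ is positive semidefinite, so the lower-semicontinuity step goes through. Since $1/n\to 0$, the auxiliary noise disappears in the limit and the limiting state and adjoint equations are driven solely by $W^0$, which yields an $\mathbb{F}^0$-adapted equilibrium. The main obstacle is precisely the loss of the global positive definiteness of $\b Q$: everything hinges on confining the dynamics to the nonnegative orthant where $\b Q$ is coercive, and on checking that the elliptic regularization perturbs this structure only by $O(1/n)$ while not obstructing the recovery of $\mathbb{F}^0$-adaptedness in the limit.
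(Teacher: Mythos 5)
Your reduction to an LQS game, the orthant-coercivity substitute for the positive definiteness of $\b Q$, the corrected (non-symmetric) adjoint driver carrying the exogenous term $-e^{-\rho t}\alpha^i X^0_t$, and the $1/n$-elliptic regularization against auxiliary Brownian motions all track the paper's Step 1 faithfully; your splitting $X^{i,n}=\check X^{i,n}+R^{i,n}$ is in fact slightly more careful than the paper's bare use of positivity of the states in the a priori estimates. The genuine gap is your final claim: that since $1/n\to 0$, ``the auxiliary noise disappears in the limit and the limiting state and adjoint equations are driven solely by $W^0$, which yields an $\mathbb{F}^0$-adapted equilibrium.'' This does not follow. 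The approximating equilibria $\xi^n$ are adapted to the large filtration $\mathbb F$ generated by $(W^0,W^1,\dots,W^N)$, and weak limits in $\mathbb H^{2}_T$ (as well as a.s.\ limits of Ces\`aro means) of $\mathbb F$-adapted processes are again only $\mathbb F$-adapted; adaptedness to the strictly smaller filtration $\mathbb F^0$ is neither preserved nor created by the vanishing of the diffusion coefficients. That the limiting FBSDE is driven only by $W^0$ is a statement about the equation, not about its solution: the control $\xi$ enters as an input and may well depend on $W^1,\dots,W^N$ while the conditions of Theorem \ref{thm SMP} (whose conditional expectations are taken w.r.t.\ $\cF_t$, not $\cF^0_t$) still hold; absent a uniqueness result for the FBSDE system, no $\mathbb F^0$-measurability can be inferred.

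The paper closes exactly this gap with a separate projection step, which your proposal omits. One sets $\bar\xi^i_t:=\E[\xi^i_t\,|\,\cF^0_t]$ (nondecreasing and c\`adl\`ag because $\E[\xi^i_t|\cF^0_t]=\E[\xi^i_t|\cF^0_T]$ $\P$-a.s.), together with $\bar X^i_t:=\E[X^i_t|\cF^0_t]$ and $\bar Y^i_t:=\E[Y^i_t|\cF^0_t]$, and checks that the projected triple solves the same FBSDE (by linearity and $\mathbb F^0$-adaptedness of $X^0$). Condition (1) of Theorem \ref{thm SMP} survives conditioning directly, but condition (2) is the nontrivial point: by Jensen's inequality applied to the convex quadratic terms (the identity \eqref{eq key idenity} turns the complementarity integral into a quadratic expression in the states), one obtains
$\sum_{i=1}^N\E\big[\int_{[0,T]}(\bar Y^i_t+c^i)\,d\bar\xi^i_t\big]\le\sum_{i=1}^N\E\big[\int_{[0,T]}(Y^i_t+c^i)\,d\xi^i_t\big]=0$,
which combined with $\bar Y^i_t+c^i\ge 0$ forces each projected complementarity integral to vanish, so $\bar\xi$ is a Nash equilibrium by the sufficiency part of Theorem \ref{thm SMP}. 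You would need to add this projection argument (or an equivalent one); as written, your construction only produces an $\mathbb F$-adapted equilibrium, which is the conclusion of the paper's Step 1, not the theorem.
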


\begin{proof}
    In order to simplify the notation we take $\alpha^1=...=\alpha^N=1$ and $\rho=0$ (the proof in the general case is analogous). The rest of the proof is dived in two steps. 
    \smallbreak\noindent
    \emph{Step 1.} We first give a sketch of how to construct a Nash equilibrium as in Theorem \ref{thm main}.
    Without loss of generality, we can assume the probability space $(\Omega, \mathcal F, \mathbb P)$ to be large enough to accommodate $N$ independent Brownian motions $W^i$, $i=1,...,N$, which are independent from $W^0$. 
    Let $\mathbb F  = ( \cF _t)_t$ the right-continuous extension of the filtration generated by $(W^0,W^1,...,W^N)$, augmented by the $\P$-null sets.  

    We observe that the symmetry of $Q^i$ and the non-degeneracy of $\bar Q$ in Assumption \ref{assumption} are not satisfied. 
    However, despite the presence of the extra uncontrolled dynamics $X^0$, Theorem \ref{thm main} applies (with mimimal adjustment) and provides the existence of a Nash equilibrium $\xi = (\xi^1, ..., \xi^N)$ which is $\mathbb F$-adapted, with $\E [ |\xi_T|^2 ] < \infty$.
    In particular, the main difference is in the estimates of  Lemma \ref{lemma a priori estimates}, which can be recovered as follows. 
    For $n\in \mathbb N$, let $\xi^n$ be a Nash equilibrium of the related $n$-Lipschitz game,
     and denote by $X^n= (X^{1,n},...,X^{N,n})$ and $\tilde X^n =(\tilde X ^{1,n}, ..., \tilde X ^{N,n}) $ the solutions to the controlled and uncontrolled equations 
     \begin{align*}
         d X _t^{i,n} & =  - \delta^i  X _t^{i,n} dt + \frac1n dW^i_t +d \xi^{i,n}_t, \quad  X _{0 -}^{i,n} = x^i_0, \\ 
     d \tilde X _t^{i,n} & =  - \delta^i \tilde X _t^{i,n} dt + \frac1n dW^i_t , \quad \tilde X _{0}^{i,n} = x^i_0,
     \end{align*}
     respectively, for $i=1,...,N$.
    By optimality, for $i=1,...,N$ we have $ P^i(\xi^{i,n},\xi^{-i,n}) \geq P^i(0,\xi^{-i,n})$, which implies that
    \begin{align*}
        \mathbb{E} \bigg[ \int_0^T  X_t^{i,n} \Big(   \gamma \sum_{j=1}^N   X_t^{j,n} - X_t^{0} \Big) dt \bigg] \leq \mathbb{E} \bigg[ \int_0^T  \tilde X_t^{i,n} \Big(  \gamma \tilde X_t^{i,n}+ \gamma \sum_{j\ne i}^N   X_t^{j} - X_t^{0} \Big) dt \bigg].
    \end{align*}
    Thus, summing over $i$, we obtain
    \begin{align*}
        \mathbb{E} \bigg[ \int_0^T     \Big( \sum_{j=1}^N   X_t^{j,n} \Big) ^2  dt \bigg] 
        \leq 
        C \bigg( 1+\mathbb{E} \bigg[ \int_0^T  \sum_{i=1}^N   \Big( X_t^{0}  X_t^{i,n} + \gamma \tilde X_t^{i,n}   \sum_{j\ne i}^N   X_t^{j} \Big) dt \bigg] \bigg),
    \end{align*}
    and, using that $X_t^{i,n} >0$ for any $t \in [0,T]$, $\mathbb P $-a.s., thanks to H\"older inequality we find
    \begin{align*}
        \mathbb{E} \bigg[ \int_0^T  |X_t^{n}|^2  dt \bigg] 
        \leq 
        C \bigg( 1+ \bigg( \mathbb{E} \bigg[ \int_0^T  |X_t^{n}|^2  dt \bigg]\bigg)^{1/2} \bigg).
    \end{align*}
    Hence, we conclude that $\sup_n \mathbb{E} [ \int_0^T  |X_t^{n}|^2  dt ] < \infty$ and (as in Lemma \ref{lemma a priori estimates}) that  $\sup_n \mathbb{E} [ |\xi_T^n| ] < \infty$.
    
    We also underline that there is a difference in the optimality conditions of Theorem \eqref{thm SMP}.
    Indeed, if the process $(X,Y) = (X^1,...,X^N,Y^1,...,Y^N) \in\mathbb H^{2,2N }$ is associated to a Nash equilibrium $\xi$, then it solves the FBSDE
    $$
    \begin{cases}
        X_t^i = x_0^i -\delta^i \int_0^t X_s^ids + \xi^i_t, \\
        Y_t^{i} = \E \big[  \int_t^T  e^{-\delta^i (s-t)} (\gamma \sum_{j=1}^N X_s^j +\gamma X_s^i - X_s^0 )  ds    \big| \mathcal F _t \big],  
    \end{cases}
    $$
    and, by the analogus of  Theorem \ref{thm SMP} in the current setting, the equilibrium $\xi$ satisfies the conditions:    
    \begin{align}
        \label{cond 1 example} &\text{$Y ^{i}_t + c^{i}  \geq 0$,  for any $t \in [0,T], \ \mathbb P$-a.s.;} \\
        \label{cond 2 example} &\text{$\int_{[0,T]} ( Y ^{i}_t + c^{i}) d \xi ^i_t = 0$, $\mathbb P $-a.s.}
    \end{align}
    
    \smallbreak\noindent
    \emph{Step 2.} 
    We now construct an $\mathbb F ^0$-adapted equilibrium.
    Set $\bar \xi : = (\bar \xi ^1, ..., \bar \xi ^N)$, where 
    $$
    \bar \xi ^i : = (\E [ \xi _t^i |\cF _t^0 ])_t, \quad \text{for}\quad i=1,...,N.
    $$
    Clearly, the processes $\bar \xi ^i$ are $\mathbb F ^0$-adapted and, since $\E [ \xi _t^i |\cF _t^0 ] = \E [ \xi _t^i |\cF _T^0 ] \ \mathbb P$-a.s., we see that $\bar \xi ^i$ are nondecreasing and c\`adl\`ag. 
    Next, set
    $$
    \bar X ^i : = (\E [ X _t^i |\cF _t^0 ])_t \quad \text{and} \quad  \bar Y ^i : = (\E [ Y _t^i |\cF _t^0 ])_t, \quad \text{for} \quad i=1,...,N.
    $$ 
    With elementary arguments, we find 
    $$
    \begin{cases}
        \bar X _t^i = x_0^i -\delta^i \int_0^t \bar X _s^ids + \bar \xi ^i_t, \\
        \bar Y _t^{i} = \E \big[  \int_t^T  e^{-\delta^i (s-t)} (\gamma \sum_{j=1}^N \bar X_s^j +\gamma \bar X_s^i -  X_s^0 )  ds    \big| \mathcal F _t^0 \big].
    \end{cases}
    $$
    Next, we want to show that $\bar \xi$ is a Nash equilibrium by checking the sufficient conditions of Theorem \ref{thm SMP}. 
    By taking the conditional expectation in \eqref{cond 1 example}, we find
    \begin{equation}\label{eq condition 1 bar} 
    \text{$\bar Y ^{i}_t + c^{i} \geq 0$, for any $t \in [0,T], \ \mathbb P$-a.s.}
    \end{equation} 
    Also,  similarly to  \eqref{eq key idenity}, denoting by $\tilde X^i $ the solution to the uncontrolled equation $d \tilde X _t^i  =  - \delta^i \tilde X _t^i dt , \ \tilde X _{0}^i = x^i_0$, we have
    \begin{align*}
      \mathbb E \bigg[ \int_{[0,T]} ( \bar Y ^{i}_t + c^{i}) d \bar \xi ^i_t \bigg] 
      & = \E \bigg[ \int_0^T \Big(\gamma \sum_{j=1}^N \bar X_t^j +\gamma \bar X_t^i -  X_t^0 \Big)  ( \bar X ^i_t - \tilde X^i_t) dt + c^{i} \bar \xi ^i_T \bigg]. 
    \end{align*} 
    Moreover, noticing that $X^0$ is $\mathbb F^0$-adapted and that $\tilde X^i$ is deterministic, summing over $i$ and using Jensen inequality for conditional expectation we obtain 
    \begin{align*}
     \sum_{i=1}^N \mathbb E \bigg[ \int_{[0,T]} ( \bar Y ^{i}_t + c^{i}) d \bar \xi ^i_t \bigg] 
      & = \E \bigg[ \int_0^T \bigg( \gamma \Big(\sum_{i=1}^N \bar X_t^i \Big) ^2 + \gamma \sum_{i=1}^N \big( \bar X_t^i \big)^2 -  X_t^0 \sum_{i=1}^N  \bar X_t^i \bigg)  dt \\
      & \quad  -  \sum_{i=1}^N \int_0^T \Big(\gamma \sum_{j=1}^N \bar X_t^j + \gamma \bar X_t^i -  X_t^0 \Big)   \tilde X^i_t dt +  \sum_{i=1}^N c^{i} \bar \xi ^i_T \bigg] \\ 
      & \leq \E \bigg[ \int_0^T \bigg( \gamma \Big(\sum_{i=1}^N  X_t^i \Big) ^2 + \gamma \sum_{i=1}^N \big(  X_t^i \big)^2 -  X_t^0 \sum_{j=1}^N   X_t^i \bigg)  dt \\
      & \quad  -  \sum_{i=1}^N \int_0^T \Big(\gamma \sum_{j=1}^N  X_t^j + \gamma  X_t^i -  X_t^0 \Big)   \tilde X^i_t dt +  \sum_{i=1}^N c^{i}  \xi ^i_T \bigg] \\ 
      & =  \sum_{i=1}^N \mathbb E \bigg[ \int_{[0,T]} (  Y ^{i}_t + c^{i}) d  \xi ^i_t \bigg].
    \end{align*}  
    Thus, using \eqref{eq key idenity} and  \eqref{cond 2 example}, we get
    \begin{align*}
     \sum_{i=1}^N \mathbb E \bigg[ \int_{[0,T]} ( \bar Y ^{i}_t + c^{i}) d \bar \xi ^i_t \bigg] \leq \sum_{i=1}^N \mathbb E \bigg[ \int_{[0,T]} (  Y ^{i}_t + c^{i}) d \xi ^i_t \bigg]  =0,
     \end{align*} 
     which, together with \eqref{eq condition 1 bar}, in turn implies that 
     $$ 
     \text{$\int_{[0,T]} ( \bar Y ^{i}_t + c^{i}) d \bar \xi ^i_t = 0$, $\mathbb P $-a.s.}
     $$
    Finally, we can invoke Theorem \ref{thm SMP}, in order to conclude that $\bar \xi$ is a Nash equilibrium. 
\end{proof}

\smallskip 
\textbf{Acknowledgements.} 
Funded by the Deutsche Forschungsgemeinschaft (DFG, German Research Foundation) - Project-ID 317210226 - SFB 1283 
\bibliographystyle{siam}
\bibliography{main.bib}
\end{document}